
\documentclass[12pt, reqno]{amsart}
\usepackage{longtable}
\usepackage[dvipsnames]{xcolor}
\usepackage{tikz}
\usepackage{stmaryrd}
\usetikzlibrary{matrix,arrows,decorations.pathmorphing}
\usepackage{amsfonts}
\usepackage{pdfpages}
\usepackage{graphicx}
\usepackage{mathpazo}
\usepackage{euler}
\usepackage{amssymb}
\usepackage{amsmath,mathtools}
\usepackage{fullpage}
\usepackage{caption}
\usepackage{amsthm}
\usepackage{thmtools}
\usepackage{thm-restate}
\usepackage{enumerate}
\usepackage{picinpar} 
\usepackage{tikz-cd}
\usepackage{todonotes}
\setlength{\marginparwidth}{2cm}
\usepackage{color}
\usepackage{latexsym}
\usepackage[toc,page]{appendix}
\usepackage{url} 
\usepackage{multicol}
\usepackage{rotating}
\usepackage[numbers]{natbib}         
\usepackage[colorlinks]{hyperref} 
\hypersetup{colorlinks=true, linkcolor=black, citecolor = OliveGreen, urlcolor = black}
\makeatletter
\newcommand{\customlabel}[2]{%
   \protected@write \@auxout {}{\string \newlabel {#1}{{#2}{\thepage}{#2}{#1}{}} }%
   \hypertarget{#1}{#2}}
\makeatother
\usepackage{setspace}
\usepackage{mathrsfs}      

\usepackage{float}
\usepackage{setspace}
\setlength{\bibsep}{0.0pt}
\usepackage{indentfirst}           
\usepackage[T1]{fontenc}

\allowdisplaybreaks[2]

\newcommand{\RR}{\mathbb{R}}      
\newcommand{\QQ}{\mathbb{Q}}
\newcommand{\CC}{\mathbb{C}}
\newcommand{\Qp}{\mathbb{Q}_p}

\newcommand{\Qbar}{\overline{\mathbb{Q}}}

\newcommand{\Fp}{\mathbb{F}_p}


\usepackage{listings}
\usepackage{comment}
\setcounter{tocdepth}{1}
\usepackage{manfnt}
\usepackage{longtable}



\makeatletter
\renewcommand{\@secnumfont}{\bfseries}
\makeatother

\usepackage[indentafter]{titlesec}
\titleformat{name=\section}{\centering\scshape\bfseries}{\thetitle.}{0.5em}{}
\titleformat{name=\subsection}[runin]{\bfseries}{\thetitle.}{0.5em}{}[.]
\titleformat{name=\subsubsection}[runin]{}{\thetitle.}{0.5em}{\itshape}[.]


\makeatletter
\def\@tocline#1#2#3#4#5#6#7{\relax
  \ifnum #1>\c@tocdepth 
  \else
    \par \addpenalty\@secpenalty\addvspace{#2}%
    \begingroup \hyphenpenalty\@M
    \@ifempty{#4}{%
      \@tempdima\csname r@tocindent\number#1\endcsname\relax
    }{%
      \@tempdima#4\relax
    }%
    \parindent\z@ \leftskip#3\relax \advance\leftskip\@tempdima\relax
    \rightskip\@pnumwidth plus4em \parfillskip-\@pnumwidth
    #5\leavevmode\hskip-\@tempdima
      \ifcase #1
       \or\or \hskip 1em \or \hskip 2em \else \hskip 3em \fi%
      #6\nobreak\relax
    \dotfill\hbox to\@pnumwidth{\@tocpagenum{#7}}\par
    \nobreak
    \endgroup
  \fi}
\makeatother

\makeatletter
\renewcommand{\@biblabel}[1]{[#1]\hfill}
\makeatother

\makeatletter
\def\@tocline#1#2#3#4#5#6#7{\relax
  \ifnum #1>\c@tocdepth 
  \else
    \par \addpenalty\@secpenalty\addvspace{#2}%
    \begingroup \hyphenpenalty\@M
    \@ifempty{#4}{%
      \@tempdima\csname r@tocindent\number#1\endcsname\relax
    }{%
      \@tempdima#4\relax
    }%
    \parindent\z@ \leftskip#3\relax \advance\leftskip\@tempdima\relax
    \rightskip\@pnumwidth plus4em \parfillskip-\@pnumwidth
    #5\leavevmode\hskip-\@tempdima
      \ifcase #1
       \or\or \hskip 1em \or \hskip 2em \else \hskip 3em \fi%
      #6\nobreak\relax
    \dotfill\hbox to\@pnumwidth{\@tocpagenum{#7}}\par
    \nobreak
    \endgroup
  \fi}
\makeatother

\numberwithin{equation}{subsection}

\numberwithin{equation}{subsection}

\newtheorem{theorem}[subsection]{Theorem}
\newtheorem{lemma}[subsection]{Lemma}
\newtheorem{coro}[subsection]{Corollary}
\newtheorem{conjecture}[subsection]{Conjecture}
\newtheorem{prop}[subsection]{Proposition}

\theoremstyle{definition}
\newtheorem{defn}[subsection]{Definition}
\newtheorem{exam}[subsection]{Example}

\newtheorem{assume}[subsection]{Assumption}

\newtheorem{remark}[subsection]{Remark}

\theoremstyle{remark}

\newcommand{\mZ}{\mathbb{Z}}

\newcommand{\mQ}{\mathbb{Q}}

\newcommand{\mC}{\mathbb{C}}

\newcommand{\mP}{\mathbb{P}}

\newcommand{\cS}{\mathcal{S}}

\newcommand{\fp}{\mathfrak{p}}

\newcommand{\fP}{\mathfrak{P}}

\newcommand{\brk}[1]{ \left\lbrace #1 \right\rbrace }

\newcommand{\tth}{^{\text{th}}}

\newcommand{\iso}{\cong}

\newcommand{\X}{\mathscr X}
\newcommand{\U}{\mathcal U}
\renewcommand{\O}{\mathcal O}
\newcommand{\ol}{\overline}
\newcommand{\FF}{\mathbb{F}}
\newcommand{\ZZ}{\mathbb Z}
\newcommand{\unr}{\text{unr}}
\newcommand{\nodes}{\text{ns}}

\newcommand{\ext}{\hookrightarrow}

\newcommand{\lrra}{\longrightarrow}

\def\quotient#1#2{\raise1ex\hbox{$#1$}{\Large/} \lower1ex\hbox{$#2$}}

\DeclareMathOperator{\sm}{sm}
\DeclareMathOperator{\ord}{ord}

\DeclareMathOperator{\codim}{codim}
\DeclareMathOperator{\Sp}{Sp}

\DeclareMathOperator{\rank}{rk}

\DeclareMathOperator{\tors}{tors}

\DeclareMathOperator{\Gal}{Gal}

\DeclareMathOperator{\red}{red}

\DeclareMathOperator{\Sym}{Sym}

\DeclareMathOperator{\vrt}{vert}

\DeclareMathOperator{\MV}{MV}
\DeclareMathOperator{\New}{New}

\DeclareMathOperator{\Trop}{Trop}

\DeclareMathOperator{\conv}{conv}

\DeclareMathOperator{\vol}{vol}

\DeclareMathOperator{\Lie}{Lie}
\DeclareMathOperator{\BC}{BC}
\DeclareMathOperator{\Ab}{Ab}

\DeclareMathOperator{\Per}{Per}
\DeclareMathOperator{\redu}{red}

\newcommand{\Cp}{\mathbb{C}_p}


\begin{document}

\title{Uniform bounds for the number of rational
points on symmetric squares of curves with low Mordell--Weil rank}

\author{Sameera Vemulapalli}
\address{Department of Mathematics, University of California, Berkeley, CA 94720-3840, USA}
\email{sameera.vemulapalli@berkeley.edu}

\author{Danielle Wang}
\address{Department of Mathematics, Massachusetts Institute of Technology, Cambridge, MA
02139-4307, USA}
\email{diwang@mit.edu}

\begin{abstract}
A central problem in Diophantine geometry is to uniformly bound 
the number of $K$-rational points on a smooth curve $X/K$ in terms
of $K$ and its genus $g$.
A recent paper by Stoll proved uniform bounds for the number of
$K$-rational points on a hyperelliptic curve $X$ provided that
the rank of the Jacobian of $X$ is at most $g - 3$.
Katz, Rabinoff and Zureick-Brown generalized his result to
arbitrary curves satisfying the same rank condition.

In this paper, we prove conditional uniform bounds on the number of rational points on the symmetric square of $X$ outside its algebraic special set, provided that the rank of the Jacobian is at most $g-4$. We also find rank-favorable uniform bounds (that is, bounds depending
on the rank of the Jacobian) in the hyperelliptic case. 
\end{abstract}
\date{\today}
\maketitle

\section{Introduction}
\label{Introduction}

Let $X$ be a curve of genus $g \ge 2$, defined over a number field $K$. Let $J$ be its Jacobian, and 
let $r$ denote the Mordell--Weil rank $\rank_{\ZZ} J(\QQ)$.
In 1983, Faltings famously proved the Mordell conjecture asserting the finiteness of the number of $K$-rational points of $X$.
This result leads to the natural question of the existence of uniform bounds. A central conjecture in Diophantine geometry is
\begin{conjecture}
\label{uniformityconjecture}
    There exists a constant $B(g, K)$ such that any smooth curve $X$ over $K$ of genus $g \geq 2$ has at most $B(g, K)$ rational points.
\end{conjecture}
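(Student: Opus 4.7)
The plan is to split the conjecture by the Mordell--Weil rank $r$ of the Jacobian $J$ and to combine two rather different circles of methods. In the low-rank range $r \le g-3$, the uniform bound is already available from the theorems of Stoll and of Katz--Rabinoff--Zureick-Brown cited in the introduction. Recall the mechanism: fix a prime $\mathfrak{p}$ of good reduction, embed $X(K_{\mathfrak{p}}) \hookrightarrow J(K_{\mathfrak{p}})$ by an Abel--Jacobi map, and use that the $\mathfrak{p}$-adic closure of $J(K)$ has codimension at least three in $J(K_{\mathfrak{p}})$ to produce sufficiently many Chabauty differentials; then count zeros of the associated Coleman integrals on the annuli and disks of a semistable model. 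For the borderline $r \in \{g-2, g-1\}$, classical Chabauty still gives finiteness but the uniformity argument breaks down because the codimension slack is gone, so one would need a refined (e.g.\ quadratic Chabauty) input combined with sharp Newton-polygon zero estimates for the $p$-adic integrals.

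For the remaining range $r \ge g$, where Chabauty is not available at all, the plan is to follow the Dimitrov--Gao--Habegger strategy. Work in the universal curve $\mathfrak{X} \to \mathcal{M}_g$ and its relative fibered powers $\mathfrak{X}^{[N]}$. The Betti map $\beta \colon \mathfrak{X}^{[N]}(\mathbb{C}) \to \mathbb{T}^{2gN}$ has maximal rank on a Zariski-open locus, and the relative Bogomolov inequality of Gao--Habegger yields a uniform lower bound
\[
\widehat{h}(P) \ge c_{g,K}\, h_{\mathrm{Fal}}(J) - O_{g,K}(1)
\]
for $K$-points $P$ outside a controlled exceptional subvariety. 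Combined with a uniform Mumford-type gap principle, which spaces out points whose canonical heights are bounded below, this converts the height inequality into a count. The exceptional locus is then handled inductively, by applying the argument to its lower-dimensional irreducible components in moduli.

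The main obstacle will be uniformity of the height inequality across $\mathcal{M}_g$: one has to work on a suitable compactification $\overline{\mathcal{M}}_g$, verify non-degeneracy of the Betti form outside a codimension-one locus, and carefully control the degeneration of the Jacobian at the boundary. A secondary but still nontrivial issue is making the count genuinely uniform in $r$, rather than obtaining a bound of shape $c_{g,K}^{1+r}$ as in the original Dimitrov--Gao--Habegger theorem; removing this dependence on $r$ likely requires either an unconditional input like the abelian Bombieri--Lang conjecture or a substantial strengthening of the gap principle.
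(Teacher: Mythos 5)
This statement is labeled a \emph{conjecture} in the paper, and the paper offers no proof of it: it is the uniformity conjecture (in the spirit of Caporaso--Harris--Mazur), which remains open. There is therefore nothing to compare your attempt against, and the attempt itself does not close the gap. You have, in effect, written a survey of the known partial results and a plausible research program rather than a proof, and your own final paragraph concedes the decisive obstruction: the Dimitrov--Gao--Habegger theorem yields a bound of the shape $c_{g,K}^{1+r}$, so its rank-dependence has to be removed to get a bound depending only on $g$ and $K$. That removal is exactly what is open; invoking ``an unconditional input like the abelian Bombieri--Lang conjecture'' is invoking another open problem.

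Two further gaps are worth flagging even within the ranges you treat. First, the low-rank theorems of Stoll and of Katz--Rabinoff--Zureick-Brown that you cite are stated and proved over $\QQ$, not over an arbitrary number field $K$; extending them to $K$ (uniformly in the degree and discriminant of $K$, as the conjecture's dependence on $K$ alone would require after fixing $K$, but also compatibly with the $p$-adic machinery over $\cO_K$-models) is not automatic. Second, you explicitly leave the borderline range $r \in \{g-2, g-1\}$ unresolved, observing that classical Chabauty gives finiteness but not uniformity there; ``a refined (e.g.\ quadratic Chabauty) input combined with sharp Newton-polygon zero estimates'' is a direction, not an argument, and uniform quadratic Chabauty bounds of the required strength are not currently available. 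In short, the proposal correctly identifies the landscape but does not supply a proof, because no proof is known.
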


Recent years witnessed progress on this uniformity conjecture for curves with low Mordell--Weil rank. Specifically, we have the following two results.
\begin{theorem}[\protect{\cite[Theorem~9.1]{stoll2013uniform}}]\label{thm:hypuniform}
Let $X$ be a hyperelliptic curve over $\QQ$ of genus $g\ge 3$ and suppose
    $r \le g - 3$. If $r = 0$, then
    \[
    	\#X(\QQ) \le 33(g-1) + 1.
    \]
If $r \geq 1$, then
\[
	\#X(\QQ) \le 8rg + 33(g-1) - 1.
\]
\end{theorem}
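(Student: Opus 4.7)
The plan is to deploy the Chabauty--Coleman method at a carefully chosen prime $p$. Fix a base point $P_0\in X(\QQ)$. The hypothesis $r\le g-3$ guarantees a subspace $V\subseteq H^0(X_{\QQ_p},\Omega^1)$ of dimension at least $g-r\ge 3$ whose elements $\omega$ have Coleman integrals $\eta_\omega(P):=\int_{P_0}^P\omega$ vanishing on all of $X(\QQ)$. The strategy is to bound, disk by disk in a semistable model of $X$ over (a ramified extension of) $\ZZ_p$, the number of common zeros of $\{\eta_\omega\}_{\omega\in V}$, and then sum the local contributions.

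On a residue disk of smooth reduction, each $\eta_\omega$ is a convergent power series on an open unit disk, and a Newton-polygon count bounds the number of common zeros of three or more such integrals by an absolute constant independent of the disk. On an annular residue class coming from a node of the special fiber, the integrals acquire a logarithm term, $\eta_\omega=c_\omega\log t+(\text{analytic})$, and a $p$-adic Rolle estimate bounds the zero count on each annulus in terms of its length (governed by the combinatorics of the dual graph). Summing these smooth-disk and annular contributions and optimizing the choice of prime (small primes such as $p=2,3$ are natural, possibly after a tame base change to reach semistability) should yield the uniform constants $33$ and $8$; the linear dependence on $r$ in the second bound enters through those $c_\omega$ that do not vanish identically, whose count scales with the rank.

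The main obstacle is the annular contribution when the log slopes $c_\omega$ are themselves small in $p$-adic absolute value: naive bounds on zeros of $c_\omega\log t+(\text{analytic})$ are then too weak. The key input is $\dim V\ge 3$: two forms in $V$ can be used to control the common zeros of two nontrivial logarithm terms, and the third handles the remaining annuli where both slopes vanish. This is precisely why one must strengthen the classical Chabauty assumption $r\le g-1$ to $r\le g-3$. In the hyperelliptic case one gains additional structural leverage from the fact that the dual graph of a hyperelliptic semistable model admits a degree-$2$ map to a tree, which constrains the annular geometry and is what allows one to extract the explicit constants $33$ and $8$ in the stated bound rather than merely a function of $g$.
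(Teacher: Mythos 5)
There is a genuine gap, and it concerns the single most important technical point in Stoll's argument. You correctly identify that one must work with annular residue classes on a semistable model, and you correctly observe that the Berkovich--Coleman integral on an annulus takes the shape $c_\omega \log t + (\text{analytic})$. But the central difficulty is \emph{not} that $c_\omega$ might be $p$-adically small; it is that on an annulus the Berkovich--Coleman (``naive'') integral and the \emph{abelian} integral genuinely differ. Chabauty's method only controls rational points via the abelian integral, since it is the one that vanishes on $\log\overline{J(\QQ)}$; but only the Berkovich--Coleman integral admits a one-variable Laurent-series expansion on the annulus that is amenable to a Newton-polygon or $p$-adic Rolle count. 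Stoll's Proposition 7.3 (restated as \autoref{codimensionone} in this paper) is exactly the bridge: the subspace $V\subseteq H^0(J_{\CC_p},\Omega^1)$ on which (i) the two integrals agree on the given annulus and (ii) the Laurent expansion has no $dt/t$ term has codimension at most $2$. Intersecting $V$ with the (at least $(g-r)$-dimensional) annihilator of $\log\overline{J(\QQ)}$ still leaves a nonzero $\omega$ precisely when $r\le g-3$. That is where the hypothesis $r\le g-3$ comes from, and it is why the classical bound $r\le g-1$ no longer suffices once annuli are in play.

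Your proposed explanation---``two forms in $V$ control the common zeros of two nontrivial logarithm terms, and the third handles the remaining annuli where both slopes vanish''---is not how the argument runs and does not plug the gap. Stoll does not form a system of common zeros of several $\eta_\omega$ on one residue class; he chooses a \emph{single} good $\omega$ per disk or annulus (with no $dt/t$ term and with $\prescript{\Ab}{}{\int}=\prescript{\BC}{}{\int}$) and bounds the zeros of that one integral, summing the contributions with the aid of Riemann--Roch/Clifford-type estimates on $\sum_P \ord_P\omega$ and a combinatorial bound on the number of disks and annuli (cf.\ \autoref{stoll53}). The rank-favorable dependence on $r$ comes from \autoref{stoll77}: inside a $(g-\ell)$-dimensional space of good differentials one can find an $\omega$ with at most $2\mu_e\ell$ annular zeros, applied with $\ell$ tied to $r$---not from counting the $\omega$ with $c_\omega\ne 0$. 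Finally, the hyperelliptic hypothesis is used to get an explicit, bounded-complexity semistable model (via the equation $y^2=f(x)$), which pins down the constants; ``a degree-$2$ map of the dual graph to a tree'' is a byproduct rather than the working mechanism. Without the abelian-versus-Berkovich--Coleman comparison, your outline cannot reach the stated theorem.
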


\begin{theorem}[\protect{\cite[Theorem~5.1]{katz2015uniform}}]\label{thm:anyuniform}
Let $X$ be any smooth curve over $\mQ$ of genus $g\geq 3$, and suppose that $r \leq g-3$. Then
$$\#X(\mathbb{Q}) \leq 84g^2 - 98g + 28.$$
\end{theorem}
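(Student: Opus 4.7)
The plan is to use $p$-adic Chabauty--Coleman together with non-archimedean (Berkovich-theoretic) uniformization, which is the strategy Stoll introduced for hyperelliptic curves in the proof of Theorem~\ref{thm:hypuniform} and that we would now extend to arbitrary curves. First, fix a prime $p$ (say $p = 3$ for a clean bound) and invoke the hypothesis $r \leq g - 3$: the kernel of the natural map $H^0(J_{\Qp}, \Omega^1) \to \Hom(\overline{J(\QQ)}, \Qp)$ has dimension at least $g - r \geq 3$. Pulling back to $X$ yields a subspace $V \subseteq H^0(X_{\Qp}, \Omega^1)$ with $\dim V \geq 3$ whose Coleman integrals vanish on the closure of $X(\QQ)$ inside $X(\Qp)$, so that $X(\QQ)$ is contained in the common zero locus of these integrals.

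Second, work with the Berkovich analytification $X^{\an}$ over $\Cp$ and its minimal skeleton $\Gamma$. The topology of $\Gamma$ is controlled by $g$ alone (first Betti number at most $g$, at most $3g-3$ edges), and the complement $X^{\an} \setminus \Gamma$ decomposes into open residue disks glued along annular neighborhoods of the edges of $\Gamma$. On each such piece the tropicalization of a Coleman integral is a piecewise-$\QQ$-linear function whose slopes and zeros can be read off by non-archimedean Newton polygon techniques. This replaces the dependence on a specific smooth model (the weak point of classical Chabauty) by data that is purely combinatorial in $g$.

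Third, bound the rational points contributed by each basic wide-open piece. On each residue disk, a regular differential has a bounded-degree leading term, and by varying $\omega \in V$ one bounds the zeros of its Coleman integral on the disk. On each annulus, the key observation is that the slope structure of the tropicalized integral at the two ends controls the number of zeros in the interior, and using $\dim V \geq 3$ one can choose $\omega \in V$ that annihilates the top two slope contributions at any prescribed annulus, reducing the zero count there. Summing the resulting local bounds over the $O(g)$ disks, annuli, and edges of $\Gamma$ produces the claimed quadratic-in-$g$ estimate $84g^2 - 98g + 28$, with the precise constants arising from the worst-case zero contribution of each building block.

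The main obstacle is the annulus analysis, where one must carefully orchestrate a \emph{single} choice of $\omega \in V$ (or at most a few choices whose bounds can be amalgamated) so that the Newton-polygon contributions responsible for excess zeros are killed uniformly across the entire skeleton $\Gamma$. This is precisely where the hypothesis $r \leq g - 3$, giving $\dim V \geq 3$ rather than the classical $\dim V \geq 1$, is essential: it provides just enough flexibility to cancel two slope parameters per annulus while retaining a nonzero form. Executing this cancellation uniformly across all annuli of a graph of arbitrary topology, and proving the resulting slope bounds with constants independent of the reduction type of $X$, is the technical heart of the argument.
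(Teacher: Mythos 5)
Your high-level outline (Chabauty plus Berkovich uniformization, residue disks and annuli glued along a skeleton of size $O(g)$, Newton-polygon zero counting) is on the right track, but there is a genuine gap in the reason you give for the threshold $r \le g-3$, and it is precisely the issue that the proof of \cite[Theorem~5.1]{katz2015uniform} is built around. You conflate the abelian integral (the one that vanishes on $\overline{J(\QQ)}$ and thus on $X(\QQ)$) with the integral that admits a tractable Laurent expansion on an annulus. These are genuinely different on annuli: the abelian integral is the correct one for Chabauty, but it is \emph{not} given by formal antidifferentiation of a Laurent series; the Berkovich--Coleman integral \emph{is}, but it has no a priori relationship with the Mordell--Weil group. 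The actual source of the ``$-3$'' is Stoll's Proposition~7.3 (quoted here as \autoref{codimensionone}): the subspace of $H^0(J_{\Cp},\Omega^1)$ on which the abelian and Berkovich--Coleman integrals agree and the Laurent expansion has no $dt/t$ term has codimension at most $2$, so one needs $g - r - 2 \ge 1$ to retain a usable form. Your explanation --- that the extra two dimensions are used to ``cancel two slope parameters per annulus'' --- misattributes the condition and, more importantly, leaves the annulus analysis without a valid object to tropicalize: without first passing to a differential for which ${}^{\Ab}\!\!\int = {}^{\BC}\!\!\int$, the Newton-polygon/slope machinery is being applied to the wrong integral.

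Separately, you underestimate what is needed to get a \emph{uniform} bound for \emph{arbitrary} curves. Stoll's hyperelliptic argument leans on the hyperelliptic involution; \cite{katz2015uniform} replace this with genuinely new tools --- the Slope Formula, non-archimedean potential theory, and linear systems of divisors on metric graphs --- to control the zero counts across all annuli and edges of the skeleton in a way that depends only on $g$. Your phrases ``bounded-degree leading term'' on disks and ``slope structure of the tropicalized integral'' gesture at this but do not supply the machinery. Getting the constants to come out as $84g^2 - 98g + 28$, or indeed to come out quadratic in $g$ at all with no dependence on the reduction type, requires that metric-graph theory; this is the technical heart you correctly flag as unresolved, but the correct heart is different from the one you describe.
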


Another question that Faltings' theorem naturally lends itself to is whether a similar finiteness
statement holds for higher degree points, i.e. for some $d \in \mathbb{Z}_{>0}$, points $P \in X(\overline{\mathbb{Q}})$ such that $[\mathbb{Q}(P) \colon \mathbb{Q}] \leq d$. 
Due to another deep result of Faltings \cite{faltings1994general} concerning points on subvarieties of abelian varieties, the answer is yes once we exclude points coming from the algebraic special set $\cS(\Sym^d X)$ (cf.~\autoref{special}) of the $d\tth$ symmetric power $\Sym^d X$ of the curve $X$.

The proofs of the above uniformity results involve variations of the $p$-adic method known as the Chabauty--Coleman method utilizing different theories of $p$-adic integration and Berkovich curves.
This $p$-adic technique has been generalized from curves to symmetric powers of curves through the work of Siksek \cite{siksek} and Park \cite{park}. 
Specifically, the work of Siksek established the general setup for symmetric power Chabauty--Coleman, and Park's work used tropical intersection theory to produce a conditional, effective bound on the number of points on the symmetric powers of curves with good reduction at $p$ lying outside of its algebraic special set.

\subsection{Statement of results}
In this paper, we prove conditional \emph{uniform} bounds on the number of points on the symmetric square of a curve $X$, with relatively low Mordell--Weil rank, lying outside of its algebraic special set. 

\begin{restatable}{thmm}{bound}
\label{finalboundcurves}
    Let $X$ be a smooth, projective, geometrically integral curve over $\QQ$ with genus $g \ge 4$ satisfying both $r \leq g-4$ and Assumptions
    \ref{zeroDimAssumption} and \ref{transverseAssumption}. Then the number of points in $(\Sym^2 X)(\QQ)$ lying outside
    its algebraic special set is at most
    \[
       288g^4 + \frac{1616}3g^3 - \frac{2900}9g^2 + \frac{11654}9g - \frac{4012}9.
    \]
\end{restatable}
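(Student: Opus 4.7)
The plan is to generalize the uniform Chabauty--Coleman approach of Stoll and Katz--Rabinoff--Zureick-Brown from curves to the two-dimensional setting of $\Sym^2 X$, using the symmetric-power Chabauty--Coleman machinery of Siksek and Park. The rank hypothesis $r \le g - 4$ gives an excess of two beyond the Chabauty dimension for $\Sym^2 X$, playing the role that Stoll's $r \le g - 3$ plays for $X$ itself; it is precisely this excess of two that drives the uniform bound.

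Concretely, I would fix a suitable prime $p$ and let $V \subset H^0(X_{\Qp}, \Omega^1)$ denote the subspace of $1$-forms whose Coleman integrals vanish on the $p$-adic closure of $J(\QQ)$, so that $\dim V \ge g - r \ge 4$. Wedges $\omega \wedge \omega'$ with $\omega, \omega' \in V$ give regular $2$-forms on $\Sym^2 X$ that vanish on all of $(\Sym^2 X)(\QQ)$; on the complement of the algebraic special set this vanishing locus is zero-dimensional, so symmetric-square Chabauty--Coleman applies. In each residue class of $(\Sym^2 X)^{\an}$, Park's tropical intersection-theoretic bound controls the number of common zeros of the two resulting power-series integrals by the product of two Newton polygon degrees, each bounded linearly in $g$ via Riemann--Roch estimates analogous to those used by Katz--Rabinoff--Zureick-Brown. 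This yields an $O(g^2)$ bound per residue class, and summing over the $O(g^2)$ residue classes of the semistable special fiber of $\Sym^2 X$ produces the stated $O(g^4)$ bound. To pin down the explicit leading constant, I would use a Stoll-style refinement inside any residue class meeting $(\Sym^2 X)(\QQ)$: pick $(\omega, \omega') \in V \times V$ so as to additionally vanish at the known rational point, which is possible because the excess $\dim V - 2 \ge 2$.

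The principal obstacle is uniform control of intersection multiplicities on the nodal and boundary strata of the semistable model of $\Sym^2 X$. For curves, Katz--Rabinoff--Zureick-Brown reduce to a clean case analysis on wide and thin annuli at nodes; for symmetric squares the analytic boundary structure is substantially richer, involving products of annuli and disks as well as symmetric-product nodes, and it is here that the hypotheses \ref{zeroDimAssumption} and \ref{transverseAssumption} are invoked, to guarantee that the relevant tropical intersections remain zero-dimensional and proper across every stratum. Once these boundary intersection numbers are bounded uniformly in $g$, the rest of the argument is essentially mechanical, and the explicit polynomial $288 g^4 + \tfrac{1616}{3} g^3 + \cdots$ will emerge from carefully bookkeeping the contributions of each stratum type (smooth-smooth, smooth-nodal, and nodal-nodal residue classes) together with the extra disk-by-disk multiplicity gained from the Stoll-style refinement.
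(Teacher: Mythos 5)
Your high-level picture --- cover $(\Sym^2 X)(\Qp)$ by residue classes coming from a semistable model, expand the two Chabauty integrals locally, and bound common zeros via Newton polygons and Rabinoff/Park-style tropical intersection theory, then sum over residue classes --- matches the paper's strategy. However there are two substantive gaps in the reasoning you give.

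First, the key role of the rank hypothesis $r \le g-4$ is misattributed. You propose to spend the ``excess'' $\dim V - 2 \ge 2$ on a Stoll-style refinement (choosing $\omega, \omega'$ to additionally vanish at a known rational point), but that refinement is only used in the rank-favorable Theorem B for hyperelliptic curves, not here. In the proof of the statement at hand, the $-2$ is spent entirely on the $p$-adic integration issue at annuli: one works at a prime $p$ of (possibly) bad reduction, the residue tubes over nodes are annuli, and the abelian integral used by symmetric-power Chabauty need not be given by a convergent Laurent series there. Stoll's Proposition~7.3 (stated as \autoref{codimensionone} in the paper) produces a \emph{codimension-two} subspace of $H^0(J_{\Cp}, \Omega^1)$ on which the abelian and Berkovich--Coleman integrals agree and the Laurent expansion has no $dt/t$ term. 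Intersecting this with the space of forms vanishing on $\overline{J(\QQ)}$ requires $(g - r) + (g - 2) - g \ge 2$, i.e.\ $r \le g - 4$, in order to extract $d = 2$ ``good'' forms (\autoref{choosegoodomegas}). Without articulating this, the proposal has no mechanism for handling the annular residue tubes, which is precisely the point where the uniform (bad-reduction) argument differs from good-reduction symmetric-power Chabauty.

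Second, the framing via ``wedges $\omega \wedge \omega'$'' giving regular $2$-forms on $\Sym^2 X$ does not match the mechanism. The Chabauty locus is cut out by two \emph{functions} $\eta_{\omega_1}, \eta_{\omega_2}$ on $(\Sym^2 X)(\Cp)$, each an abelian integral of a single $1$-form pulled back along the Abel--Jacobi map, and the count is of common zeros of these two functions. One does not integrate a $2$-form, and the bound per residue polytube is the \emph{mixed volume} $\MV(\New(g_1), \New(g_2))$ of the two Newton polygons (Bernstein's theorem via Rabinoff's intersection multiplicity), not a ``product of two Newton polygon degrees'' --- the distinction matters for the explicit constant. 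Finally, you should make the case analysis concrete: the paper enumerates seven reduction types (smooth/smooth over $\FF_{p^2}$, coincident or distinct smooth over $\FF_p$, the corresponding nodal cases, and the mixed smooth--node case), counts residue polytubes of each type via Stoll's \autoref{stoll53}, bounds the polynomial approximants using \autoref{padicrolle} and \autoref{krzb418} (plus \autoref{pureseries} for purity of the series), accounts separately for degenerate zeros on the disks, and only then sums. Without pinning down this accounting the claimed polynomial cannot be reproduced.
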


We also obtain a rank-favorable uniform bound for hyperelliptic curves.

\begin{restatable}{thmm}{hyperelliptic}
    \label{hyperellipticrankfavorable}
    Let $H$ be a hyperelliptic curve over $\mQ$ of genus $g\geq 4$ satisfying both $r \leq g-4$ and Assumptions \ref{zeroDimAssumption} and \ref{transverseAssumption}. Then the number of points in $(\Sym^2 H)(\QQ)$ lying outside its algebraic special
    set is at most
    \[
96g^3r + 288g^3 + \frac{2192}{9}g^2r + \frac{69353}{36}g^2 - \frac{1184}{3}gr - \frac{14699}{9}g + \frac{736}{3}r + \frac{8053}{9}.
    \]
\end{restatable}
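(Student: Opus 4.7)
The plan is to retrace the proof of \autoref{finalboundcurves} step by step, replacing the rank-independent local estimates with the sharper rank-favorable local estimates available in the hyperelliptic setting. The overall architecture is unchanged: choose a prime $p$ of good or semistable reduction, use the rank hypothesis $r \le g-4$ together with Assumption \ref{zeroDimAssumption} to produce two independent Chabauty differentials in $H^0(\Sym^2 H_{\mathbb{Q}_p}, \Omega^1)$ that vanish on the $p$-adic closure of $(\Sym^2 H)(\mathbb{Q})$; decompose $(\Sym^2 H)(\mathbb{Q}_p)$ into basic pieces indexed by pairs of residue classes coming from a regular semistable model of $H$; and bound the number of $\mathbb{Q}$-points in each piece by intersecting the zero loci of the two Chabauty integrals via Park's tropical intersection theory, using Assumption \ref{transverseAssumption} to ensure the intersection is zero-dimensional.

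The essential new input is a rank-favorable bound on the number of zeros of a Coleman integral of a regular $1$-form on a residue annulus of a hyperelliptic curve. This is precisely the improvement underlying Stoll's \autoref{thm:hypuniform}: writing a hyperelliptic regular differential in the form $x^i\,dx/y$ and expanding on an annulus of the semistable model produces a Laurent series whose Newton polygon of exponents is supported in a very controlled interval, so its integral has a number of zeros that is linear in $r$ rather than quadratic in $g$. Transported through Park's tropical intersection inequality in dimension two, this should give per-piece counts on products of annuli, and on annulus-times-disk pieces, whose leading term is linear in $r$, rather than the rank-independent per-piece counts used in the proof of \autoref{finalboundcurves}.

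The last step is to assemble the local contributions globally. A semistable model of a hyperelliptic curve of genus $g$ has dual graph with $O(g)$ vertices and edges, so $(\Sym^2 H)(\mathbb{Q}_p)$ breaks into $O(g^2)$ basic pieces. Multiplying by the per-piece rank-favorable bound and summing produces the announced leading shape $g^3 r + g^3$; the lower-order terms $g^2 r,\, g^2,\, g r,\, g,\, r,\, 1$ absorb boundary contributions at the endpoints of annuli and the contribution of the diagonal fixed-point locus of the $S_2$-action on $H \times H$ used to form $\Sym^2 H$. The explicit rational constants in the statement are then obtained by carefully tracking the constants in Park's intersection inequality, in the hyperelliptic annulus bound, and in the combinatorial count of pieces.

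The main obstacle is the second paragraph: promoting the one-dimensional hyperelliptic annulus bound to the two-dimensional product setting. In one variable Stoll's argument is a direct Newton-polygon computation on a single Laurent series, whereas on $A_1 \times A_2$ one must simultaneously control the Newton polygons of both Chabauty forms in both annulus coordinates and verify that Assumption \ref{transverseAssumption} rules out the degenerate case where both forms effectively depend on only one of the two coordinates. I expect this is where the bulk of the technical work lies and where the precise numerical constants in the theorem are ultimately pinned down.
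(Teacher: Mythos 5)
Your proposal matches the paper's approach: the proof runs the Theorem~\ref{finalboundcurves} computation again, modifying only the annulus cases (1(b), 2(b), 3(b), 3(c)) by substituting Stoll's hyperelliptic rank-favorable annulus bound (Proposition~\ref{stoll77}, giving $\le 2\mu_e(r+3)$ zeros per annulus for a suitably chosen good differential) in place of the rank-independent bound $2N_p(r, 2g-2)$ from \autoref{krzb418}, then re-summing. The ``obstacle'' you flag in your last paragraph is already dispatched by \autoref{pureseries}: the two Chabauty integrals on a product of annuli are pure Laurent series $f_{i,1}(t_1) + f_{i,2}(t_2)$, so Stoll's one-variable bound directly controls each side of the Newton polytope of each $\eta_i$, and the mixed-volume computation goes through exactly as in the disk--disk cases with no new technical work.
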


From this we obtain an immediate corollary concerning rational torsion packets of $\Sym^2 H$. Recall that a \textbf{rational torsion packet} for $\Sym^2 H$ is the inverse image of the group of rational torsion points of the Jacobian $J$ of $H$ under an Abel--Jacobi map $\Sym^2 H\ext J$.

\begin{restatable}{coro}{torsionpackets}
    \label{torsionpacketcorollary}
    Let $H$ be a hyperelliptic curve over $\mQ$ of genus $g\geq 4$ satisfying Assumptions \ref{zeroDimAssumption} and \ref{transverseAssumption}. Then, the size of a rational torsion packet of $\Sym^2 H$ lying outside 
    its algebraic special set is at most
    \[
        288g^3 + \frac{69353}{36}g^2 - \frac{14699}{9}g + \frac{8053}{9}.
    \]
\end{restatable}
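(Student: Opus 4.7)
The plan is to observe that the bound in the corollary is exactly the $r = 0$ specialization of the bound in Theorem \ref{hyperellipticrankfavorable}, and to derive it by running the proof of that theorem with the Mordell--Weil group $J(\mathbb{Q})$ replaced by its torsion subgroup $J(\mathbb{Q})_{\mathrm{tors}}$. Substituting $r = 0$ into
\[
96g^3 r + 288g^3 + \tfrac{2192}{9}g^2 r + \tfrac{69353}{36}g^2 - \tfrac{1184}{3}g r - \tfrac{14699}{9}g + \tfrac{736}{3}r + \tfrac{8053}{9}
\]
yields precisely $288g^3 + \tfrac{69353}{36}g^2 - \tfrac{14699}{9}g + \tfrac{8053}{9}$, the bound required by the statement.

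The reason such a substitution is legitimate is that the Chabauty--Coleman machinery driving Theorem \ref{hyperellipticrankfavorable} is rank-local: one chooses $1$-forms annihilating the $p$-adic closure of whatever finitely generated subgroup $\Gamma \le J(\mathbb{Q})$ one wishes to track, and all subsequent estimates on the number of rational points of $\Sym^2 H$ mapping into $\Gamma$ and lying outside the algebraic special set depend only on $g$ and on $\rank_{\mathbb{Z}} \Gamma$. Applied to $\Gamma = J(\mathbb{Q})_{\mathrm{tors}}$, which has rank $0$, the Chabauty hypothesis $\rank \Gamma \le g - 4$ is automatic for $g \ge 4$, and the preimage of $\Gamma$ under an Abel--Jacobi embedding $\Sym^2 H \ext J$ is by definition a rational torsion packet. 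This accounts for the absence of any rank hypothesis on $J(\mathbb{Q})$ itself in the corollary.

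The main obstacle is therefore to verify that the proof of Theorem \ref{hyperellipticrankfavorable} is genuinely rank-local in this sense, rather than being tied as a black box to the particular group $J(\mathbb{Q})$. Concretely, one must check that each quantity entering the count --- the number of residue disks, the degrees of the Coleman integrals on $\Sym^2 H$, the tropical intersection contributions produced via the methods of Siksek and Park, and the exclusion of the algebraic special set --- either is intrinsic to $\Sym^2 H$ or depends on the Mordell--Weil data only through $\rank_{\mathbb{Z}} \Gamma$. Once this bookkeeping is confirmed, the corollary follows by taking $\Gamma = J(\mathbb{Q})_{\mathrm{tors}}$ and reading off the $r = 0$ value of the formula above.
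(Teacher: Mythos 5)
Your proposal takes essentially the same route as the paper, which simply specializes the bound of Theorem \ref{hyperellipticrankfavorable} to $r=0$. In fact your discussion of rank-locality --- that one can run the Chabauty machinery against $\Gamma = J(\mathbb{Q})_{\mathrm{tors}}$ (rank $0$) rather than $J(\mathbb{Q})$, which is what makes the corollary hold with no rank hypothesis on $J(\mathbb{Q})$ --- is more explicit than the paper's one-line proof and correctly identifies the point that justifies the substitution.
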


\subsection{Overview of Proof}
Our proof proceeds in the spirit of \protect{\cite{stoll2013uniform}} and \protect{\cite{katz2015uniform}}. 
Specifically, we first cover $(\Sym^2 X)(\Qp)$ with residue polydisks and polyannuli. 
To generalize symmetric power Chabauty to annuli, we extend  Park's results concerning tropical intersection theory for power series to Laurent series, which is made possible due to the robustness of Rabinoff's \cite{rabinoff} theory of polyhedral subdomains. 
Finally, we perform a case-by-case analysis of the common zeros of the $p$-adic integrals coming from Chabauty's method using our above results.

\subsection{Outline of paper}
We begin in \autoref{background} with a discussion of Chabauty and Coleman's method and state some relevant facts concerning $p$-adic integration. 
In \autoref{generalization}, we introduce the setup for symmetric square Chabauty needed to obtain uniform bounds
on the number of points.
In \autoref{partition}, we partition $(\Sym^2 X)(\Qp)$ into residue disks and annuli making use of Stoll's results and a combinatorial argument. 
In \autoref{boundingzerodimcomponents}, we briefly review tropical analytic geometry and generalize Park's work on tropical intersection theory to the setting of Laurent series. 
In \autoref{calculationofnumberofzeros}, we perform the actual calculations of the number of common zeros of the $p$-adic integrals coming from Chabauty's method using the Newton polygons and mixed volumes.
We conclude in \autoref{uniformbounds} by summing up all of our cases to achieve uniform bounds. 

\subsection{Acknowledgements}
The authors would like to express our gratitude to Jackson Morrow for supervising this research. We would also like to thank Joseph Gunther and Jackson Morrow for the project idea. We are also grateful to Noam Kantor, Joseph Rabinoff, and David Zureick-Brown for the invaluable discussions, explanations, and ideas. We also appreciate the useful feedback and editing Eric Katz and Joseph Gunther have given us. Finally, this research was supported by the National Science Foundation (grant number DMS-1557690)
at the Emory University Mathematics REU.

\section{Background}
\label{background}

\subsubsection{Notation}
Let $X$ be a smooth, projective, geometrically integral curve
over $\QQ$. Let $J$ be the Jacobian of $X$. Let $X_L$ or $J_L$ represent the base change of the curve
or the Jacobian respectively to any field extension $L$ of $\QQ$. Let $g$ and $r$ denote the genus and Mordell-Weil
rank of $X$ respectively. Let
$X(L)$ denote the $L$-rational points of $X$.
Let $p$ be an odd prime, and let $\mathbb{Q}_p$ denote the $p$-adics, and let $\mathbb{C}_p$
denote the completion of the algebraic closure of $\mathbb{Q}_p$.
Let $H^{0}(J_L, \Omega^{1})$ and $H^{0}(X_L, \Omega^{1})$
denote the vector spaces of regular differentials on $J_L$ and $X_L$, respectively.

We use $\X$ to refer to a model
of a curve $X$ over $\O_{K_{\fP}}$ for some valued field $K_{\fP}$ 
with residue field $\kappa$.
We say that the model $\X$ is
\textbf{proper} if the Zariski closure of any point $P \in
X(K_{\fP})$ contains exactly one point 
$\ol P$ in $\X_s \coloneqq \X_{\kappa}$ \protect{\cite[\S~2.1]{katz2013chabauty}}.
We say it is \textbf{regular} if $\X$ is regular. In particular,
regularity implies that $\ol P$ is a smooth 
point of $\X_s$ for all $P$.

Properness gives us a reduction map
\[
    \red_p\colon X(K_{\fP}) \to \X_s(\kappa).
\]
We call the preimage of a point under the reduction map a \textbf{residue tube}. 
Stoll \cite{stoll2013uniform} showed that by contracting the components in chains of $\mathbb P^1$'s of a proper regular model,
we can obtain a proper model $\X$ where all points reduce
to either smooth points or ordinary double points (nodes).
Then each residue tube
is either analytically isomorphic to a $p$-adic disk or a
$p$-adic annulus considered as a rigid analytic space, by the following theorem.

\begin{theorem}[\protect{\cite[Proposition~2.3]{Bosch1985}}]
   	With notation as above, let $\X$ be a proper model
    of $X$.
    Let $\fp$ be a uniformizer of $K_{\fP}$, and
    let $P \in \X_s(\kappa)$ be a point on the special
    fiber $\X_s$. If $P$ is a smooth point,
    then the preimage of $P$ under the reduction map is analytically
    isomorphic to $\fp\O_{K_{\fP}}$, a $p$-adic disk. If $P$ is a node, its
    preimage is analytically isomorphic to a
    $p$-adic annulus.
\end{theorem}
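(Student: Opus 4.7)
The plan is to compute the preimage of $P$ under the reduction map by identifying the residue tube with the rigid analytic generic fiber of the formal completion $\widehat{\O_{\X,P}}$, and then describing this completed local ring explicitly in each of the two cases. By properness of $\X$, a point $Q\in X(K_{\fP})$ reduces to $P$ if and only if it comes from an $\O_{K_{\fP}}$-algebra homomorphism $\widehat{\O_{\X,P}}\to \O_{K_{\fP}}$ sending the maximal ideal into $\fp\O_{K_{\fP}}$, so residue tubes are identified with such lifts.

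For the smooth case, since $\X$ is a regular two-dimensional scheme over the discrete valuation ring $\O_{K_{\fP}}$ and $P$ is a smooth point of the special fiber, I can choose $t\in \O_{\X,P}$ whose reduction modulo $\fp$ is a uniformizer at $P$ on $\X_s$; then $(t,\fp)$ generate the maximal ideal of $\O_{\X,P}$. The Cohen structure theorem yields
\[
    \widehat{\O_{\X,P}}\;\cong\; \O_{K_{\fP}}\fps{t},
\]
and the set of $\O_{K_{\fP}}$-algebra maps sending $t$ to some $t_0\in \fp\O_{K_{\fP}}$ is precisely the open $p$-adic disk $\fp\O_{K_{\fP}}$, which is the first claim.

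For the node case, I would start from the fact that an ordinary double point has special-fiber completion $\kappa\fps{x,y}/(xy)$. Lifting $x,y$ to the two-dimensional regular complete local $\O_{K_{\fP}}$-algebra $\widehat{\O_{\X,P}}$ along the surjection onto the special fiber, and applying Weierstrass preparation (using completeness and regularity of $\X$ at $P$), I expect to produce an isomorphism
\[
    \widehat{\O_{\X,P}}\;\cong\; \O_{K_{\fP}}\fps{x,y}/(xy-a)
\]
for some nonzero $a\in\fp\O_{K_{\fP}}$, whose valuation records the thickness of the node. Then the residue tube consists of pairs $(x_0,y_0)$ with $|x_0|,|y_0|<1$ and $x_0 y_0 = a$, and the substitution $y_0=a/x_0$ identifies this set with the open annulus $\{x_0 \in K_{\fP}: |a|<|x_0|<1\}$.

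The main obstacle is the ring-theoretic claim in the node case: lifting the two transverse local parameters from the special fiber to $\widehat{\O_{\X,P}}$ and arranging their product to take the Weierstrass-normal form $xy-a$. Once this normal form is established, passage to the rigid generic fiber is a routine change of variables, and the smooth case reduces essentially to Cohen's structure theorem plus Hensel's lemma.
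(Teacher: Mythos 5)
The paper offers no proof of this theorem; it cites it directly from Bosch--L\"utkebohmert \cite[Proposition~2.3]{Bosch1985}, so there is no internal argument to compare against, and I evaluate your sketch on its own merits. Your overall strategy --- identify the residue tube with $\O_{K_{\fP}}$-algebra lifts of $\widehat{\O_{\X,P}}$, compute the completed local ring, and read off the rigid-analytic structure --- is the standard and correct route, and the smooth case is essentially fine (though you do not need, and should not assume, global regularity of $\X$: smoothness of $\X_s$ at $P$ together with flatness of $\X$ over $\O_{K_{\fP}}$ already forces $\O_{\X,P}$ to be regular).

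The node case, however, has a genuine gap: you invoke ``regularity of $\X$ at $P$'' and call $\widehat{\O_{\X,P}}$ a ``regular complete local $\O_{K_{\fP}}$-algebra,'' but the model $\X$ in this theorem is deliberately \emph{not} regular. As the paper explains immediately before the statement, $\X$ is obtained from a proper regular model by contracting chains of $\mathbb{P}^1$'s, and this contraction destroys regularity; indeed it must, since (as the paper also records) regularity of $\X$ forces every $K_{\fP}$-point to reduce to a smooth point of $\X_s$, which would make the node case vacuous. Concretely, $\O_{K_{\fP}}\fps{x,y}/(xy-a)$ is regular only when $v(a)=1$, in which case the annulus $\{|a|<|x|<1\}$ contains no $K_{\fP}$-rational points; the cases with content have $v(a)\ge 2$, where the ring is singular. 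Consequently the normal form $xy-a$ cannot be extracted by Weierstrass preparation in a regular ring (and it is not clear how a one-variable preparation theorem would apply here in any case). The correct derivation is deformation-theoretic: the ordinary double point $\kappa\fps{x,y}/(xy)$ has miniversal deformation $xy=t$, and flatness of $\X$ over $\O_{K_{\fP}}$ together with geometric integrality of the generic fiber $X$ (forcing $a\neq 0$) yields $\widehat{\O_{\X,P}}\cong\O_{K_{\fP}}\fps{x,y}/(xy-a)$ with $a\in\fp\O_{K_{\fP}}\setminus\{0\}$. Once that normal form is in hand, your change of variables $y_0=a/x_0$ and the passage to the rigid generic fiber go through as you describe.
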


\subsection{The Chabauty--Coleman method}

For a detailed exposition of Chabauty and Coleman's 
method, we refer the reader to \cite{mccallum2007method}.

In 1941, Chabauty \cite{chabauty} showed that the number of rational points on $X$ is finite when $r \le g - 1$. His main idea was
to consider $X(\QQ)$ in the more tractable spaces
\begin{center}
\begin{tikzcd}
X(\mathbb{Q}) \arrow[r,hook] \arrow[swap, d, hook] & X(\mathbb{Q}_p) \arrow[d, hook] \\%
J(\mathbb{Q}) \arrow[r, hook] & J(\mathbb{Q}_p),
\end{tikzcd}
\end{center}
for any prime $p$.
Let $\overline{J(\mathbb{Q})}$ represent the $p$-adic closure of $J(\mathbb{Q})$ inside $J(\mathbb{Q}_p)$. This topological space carries the structure of a $p$-adic Lie group of dimension $r'$, which sits inside the compact $p$-adic Lie group $J(\QQ_p) \iso (\mZ_p)^g \oplus J(\mQ_p)_{\tors}$.
Note that the bottom arrow factors as 
$J(\mathbb{Q}) \hookrightarrow \overline{J(\mathbb{Q})} \hookrightarrow J(\mathbb{Q}_p)$.

The idea behind Chabauty's method is to bound
$\# (\ol{J(\QQ)} \cap X(\QQ_p))$ instead of 
$\# X(\QQ)$. 
Under the condition $r\leq g-1$, \protect{\cite[Lemma~4.2]{mccallum2007method}} tells us that $r' \le r \le g-1$. 
Using this fact, Chabauty constructed locally analytic functions $f$ on $X(\QQ_p)$ satisfying $f(P) = f(Q)$
for $P, Q \in \ol{J(\QQ)}\cap X(\QQ_p)$ reducing
to the same point on $X_{\mathbb{F}_p}(\FF_p)$. Then he used the fact that locally
analytic functions, which are not locally constant, cannot achieve the same value
infinitely often to deduce that $\#(\ol{J(\QQ)}\cap X(\QQ_p))$ is finite.

Later, Coleman used 
Newton polygons to give an effective bound on the number of
rational points.

\begin{theorem}[\protect{\cite[Theorem~4]{coleman}}]
    If $p$ is a prime of good reduction such that $p > 2g$ and $r \leq g-1$, then \[ \#X(\mathbb{Q}) \leq \#X_{\mathbb{F}_p}(\mathbb{F}_p) + (2g-2). \]
\end{theorem}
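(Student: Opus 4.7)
The plan is to execute the Chabauty--Coleman strategy with careful bookkeeping of $p$-adic integrals on each residue disk. Since $r \leq g-1$, the $\QQ_p$-linear map $H^0(J_{\QQ_p},\Omega^1)\to \Hom(\overline{J(\QQ)},\QQ_p)$ induced by Coleman integration has image of dimension at most $r\leq g-1$, so its kernel is nontrivial. If $X(\QQ)=\emptyset$ there is nothing to prove, so fix a basepoint $P_0\in X(\QQ)$, let $\iota\colon X\hookrightarrow J$ be the associated Abel--Jacobi embedding, pick a nonzero $\omega$ in the kernel, and set $\eta=\iota^*\omega\in H^0(X_{\QQ_p},\Omega^1)$. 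Then $\eta$ is a nonzero global regular differential on $X$ whose Coleman integral vanishes on $X(\QQ)$: for each $P\in X(\QQ)$, we have $\int_{P_0}^P\eta=0$.

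Next I would use good reduction to localize. Choose a smooth proper model $\X/\ZZ_p$ of $X$; for each $\bar Q\in X_{\FF_p}(\FF_p)$ the residue disk $D_{\bar Q}$ is analytically isomorphic to $p\ZZ_p$ via a local parameter $t_{\bar Q}$, and $\eta|_{D_{\bar Q}}=w_{\bar Q}(t)\,dt$ for some $w_{\bar Q}(t)\in\ZZ_p\fps{t}$. Coleman integration produces an antiderivative $F_{\bar Q}(t)=\int_0^t w_{\bar Q}(s)\,ds$, and for $P\in D_{\bar Q}\cap X(\QQ)$ there is a single constant $c_{\bar Q}$ (determined by the basepoint and the Coleman integral from $P_0$ into the disk) with $F_{\bar Q}(t_{\bar Q}(P))=c_{\bar Q}$. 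It therefore suffices to bound the number of zeros of $F_{\bar Q}-c_{\bar Q}$ on $D_{\bar Q}$, then sum over $\bar Q$.

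Let $n_{\bar Q}$ be the number of zeros of $w_{\bar Q}$ on $D_{\bar Q}$, counted with multiplicity. This equals the multiplicity with which the divisor of $\eta$ meets the disk, and since the canonical divisor of $X$ has degree $2g-2$, the sum $\sum_{\bar Q}n_{\bar Q}$ is at most $2g-2$. Coleman's Newton polygon lemma then provides the bound
\[
\#(D_{\bar Q}\cap X(\QQ))\leq \#\{t\in p\ZZ_p:F_{\bar Q}(t)=c_{\bar Q}\}\leq n_{\bar Q}+1,
\]
and summing over residue disks gives
\[
\#X(\QQ)\leq\sum_{\bar Q\in X_{\FF_p}(\FF_p)}(n_{\bar Q}+1)\leq (2g-2)+\#X_{\FF_p}(\FF_p).
\]

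The main obstacle is the Newton polygon step. Integrating $w(t)=\sum a_n t^n$ yields $F(t)=\sum\frac{a_n}{n+1}t^{n+1}$, and if the denominators $n+1$ were $p$-divisible the Newton polygon of $F-c$ could gain many extra vertices inside the disk and hence many more zeros than $w$. The hypothesis $p>2g$ is exactly what makes this bookkeeping work: only coefficients up to index $2g-2$ can influence the portion of the Newton polygon that governs zeros in the disk (since $\eta$ has at most $2g-2$ zeros), and for those indices $n+1\leq 2g-1<p$ is a $p$-adic unit, so integration shifts the relevant part of the Newton polygon by at most one slope segment and therefore produces at most one additional zero per disk. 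Making this quantitative via a careful case analysis on the Newton polygon of $F_{\bar Q}-c_{\bar Q}$ is Coleman's key lemma and the technical heart of the proof.
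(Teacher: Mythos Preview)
The paper does not give its own proof of this statement; it is quoted as background and attributed to Coleman \cite[Theorem~4]{coleman}. So there is no in-paper argument to compare against.

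That said, your sketch is the standard Chabauty--Coleman argument and is essentially correct. One imprecision worth flagging: the quantity $n_{\bar Q}$ that feeds into the Newton polygon bound is not ``the number of zeros of $w_{\bar Q}$ on $D_{\bar Q}$'' (a generic-fiber count), but rather $\ord_{t=0}(w_{\bar Q}\bmod p)$, the order of vanishing of the \emph{reduction} $\bar\eta$ at $\bar Q$ on the special fiber $X_{\FF_p}$. This is what guarantees $\sum_{\bar Q} n_{\bar Q}\le 2g-2$, since $\bar\eta$ is a nonzero regular differential on the smooth genus-$g$ curve $X_{\FF_p}$. Your explanation of the role of $p>2g$ is right in spirit: the precise statement is that if $n=\ord_{t=0}(\bar w)$ then the $(n+1)$-st coefficient of $F$ is a $p$-adic unit (since $n+1\le 2g-1<p$), which pins down the Newton polygon of $F-c$ enough to conclude it has at most $n+1$ zeros in $p\ZZ_p$.
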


There have been several refinements of Coleman's bound, most of which either seek to remove the assumption of $p$ being of good reduction or to gain some dependence on the rank of the Jacobian.  Specifically  \protect{\cite[Corollary~1.2]{lorenzinintucker}} and \protect{\cite[Corollary~6.7]{stoll2013uniform}} accomplished this by utilizing the theory of  proper regular models and using Clifford's theorem rather than Riemann-Roch as the geometric input.



In the past five years, there have been major leaps proving \emph{uniform} bounds for the number of rational points on curves which satisfy a Chabauty-like rank condition. 
Note that the previous results d	o not produce uniform bounds for the following reasons. First, the smallest prime of good reduction could be arbitrarily large, and second, a \textit{regular} proper model of $X$ can have arbitrarily long chains of $\mP^1$'s on its special fiber.
To ameliorate these issues, we forgo regularity of the model $\X$ of $X$, with the drawback that we must also analyze the integrals on $p$-adic annuli. 

The first uniformity result comes from Stoll \cite[Theorem~9.1]{stoll2013uniform}, who proves rank-favorable (depending on the rank) uniform bounds for hyperelliptic curves of Mordell--Weil rank $r\leq g-3$. The work of Katz, Rabinoff, and Zureick-Brown \cite[Theorem~5.1]{katz2015uniform} extends Stoll's uniform bound to arbitrary curves satisfying this rank condition using non-Archimedean potential theory
on Berkovich curves and the theory of linear systems and divisors
on metric graphs. However, the bound is not rank-favorable.
For our purposes, the main input is $p$-adic integration, which we recall below.

\subsection{$p$-adic integration}
\label{padicintegral}
In this section, we define the abelian and the Berkovich--Coleman integrals and discuss how they differ. We also state conditions under which the two integrals agree, and why such a fact is
necessary to prove uniform bounds.

\begin{defn}
    Since $J$ is an abelian variety over $\CC_p$, we can consider the
    abelian logarithm, which is defined to be the unique
    $\mathbb{C}_p$-Lie group homomorphism $\log \colon
    J(\mathbb{C}_p) \rightarrow \mathbb{C}_p^g
    \simeq \Lie(J)$ such that its
    derivative $d\log \colon \Lie(J) \rightarrow \Lie (\Lie
    (J)) = \Lie(J)$ is the identity map. Thus, $\log$ is also
    a local diffeomorphism. It is also a well-known fact that
    $\Lie(J)$ is the dual of $H^{0}(J_{\mathbb{C}_p},
    \Omega^{1})$.
\end{defn}

Thus, we have a bilinear map
 \[
    \Lie(J) \times H^{0}(J_{\mathbb{C}_p}, \Omega^{1}) \rightarrow \mathbb{C}_P,
\]
and we denote the evaluation pairing by $\langle \cdot , \cdot \rangle$.

\begin{defn}[Abelian integral]
    For $P, Q \in J(\mathbb{C}_p)$ and $\omega \in H^{0}(J_{\mathbb{C}_p}, \Omega^{1})$, define
    \[
        \prescript{\Ab}{}{\int_{0}^{P} \omega} \coloneqq \langle \log(P), \omega \rangle
    \]
    and
    \[
        \prescript{\Ab}{}{\int_{Q}^{P} \omega} \coloneqq \prescript{\Ab}{}{\int_{0}^{P} \omega} - \prescript{\Ab}{}{\int_{0}^{Q} \omega}.
    \]
\end{defn}


Recall from above that to obtain uniform bounds, we must work with a 
model of the curve at a prime $p$ of possibly bad reduction. The problem we run into is that some of the residue tubes
are annuli, and as the abelian integral cannot be expressed as a power series on these annuli, Coleman's argument fails to carry over.

To overcome this problem, we consider both the abelian integral and the Berkovich--Coleman integral. We only need the Berkovich--Coleman integral on residue tubes, and therefore only define it on these spaces.

\begin{defn}[Berkovich--Coleman integral]
    Let $\omega \in H^{0}(J_{\mathbb{C}_p}, \Omega^{1})$.
    Let $P, Q \in J(\mathbb{C}_p)$ be in the same residue tube.
    Suppose the residue tube has local parameter $t$. Then we can write
    \[
        \omega = \sum_{n=-\infty}^{\infty} a_n t^n \frac{dt}{t}.
    \]
    If $\omega$ has no $dt/t$ term, let $f$ denote the function 
    \[
       f(t) \coloneqq \sum_{n \neq 0} \frac{a_{n-1}}{n} t^n,
    \]
    and define
    \[
    	\prescript{\BC}{}{\int_{Q}^{P} \omega} \coloneqq f(t(P)) - f(t(Q)).
    \]
\end{defn}

The abelian integral is the integral that is used to power the classical Chabauty--Coleman method, but the Berkovich--Coleman integral has the advantage that it can be evaluated as a Laurent series on an annulus, whereas the abelian integral may not have such a simple representation. In order to reap the benefits of both integrals, we look for situations in which they are equal. 

The crucial theorem is that for a codimension $2$ space of differentials, these two integrals evaluate to the same number. 

\begin{theorem}
    [\protect{\cite[Proposition~7.3]{stoll2013uniform}}]
    \label{codimensionone}
    Let $A_{\ol P}$ be an annulus in $X(\mathbb{C}_p)$. Here $\ol P \in \X_s(\ol\Fp)$ is
the reduction of a point $P \in X(\Cp)$. Let $V$ be the subspace of $H^{0}(J_{\mathbb{C}_p}, \Omega^{1})$ 
    consisting of all $\omega$ such that 
    \[
        \prescript{\Ab}{}{\int_{P}^{Q} \omega} = \prescript{\BC}{}{\int_{P}^{Q} \omega}
    \] 
    for all $P, Q \in A_{\ol P}$, and such that the Laurent series
    expansion of $\omega$ on $A_{\ol P}$ has no $dt/t$ term (note that we view $A_{\ol P}$ as a subset of
    $J(\CC_p)$ via an Abel--Jacobi map).
    Then $V$ has codimension at most $2$. 
\end{theorem}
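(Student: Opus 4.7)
The plan is to realize $V$ as the common kernel of at most two independent $\CC_p$-linear functionals on $H^0(J_{\CC_p}, \Omega^1)$, yielding $\codim V \le 2$. Fix a local parameter $t$ on the annulus $A_{\ol P}$; each $\omega \in H^0(J_{\CC_p}, \Omega^1)$ then pulls back (via an Abel--Jacobi embedding of $A_{\ol P}$ into $J$) to a convergent Laurent series
\[
  \omega|_{A_{\ol P}} \;=\; \sum_{n \in \ZZ} a_n(\omega)\, t^n \, \frac{dt}{t},
\]
whose coefficients $a_n\colon H^0(J_{\CC_p}, \Omega^1) \to \CC_p$ are $\CC_p$-linear in $\omega$. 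The first of my two linear conditions is $a_0(\omega) = 0$; this is built directly into the definition of $V$ and is also what is needed for $\prescript{\BC}{}{\int}\omega$ to make sense at all.

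On $\ker a_0$ I would then compare the two integrals. By construction $\prescript{\BC}{}{\int_P^Q}\omega = f_\omega(t(Q)) - f_\omega(t(P))$, where $f_\omega(t) := \sum_{n \ne 0} \tfrac{a_n(\omega)}{n}\, t^n$ is a rigid analytic function on $A_{\ol P}$. The abelian integral $\prescript{\Ab}{}{\int_P^Q}\omega = \langle \log(Q) - \log(P),\, \omega\rangle$ has the same differential, but the abelian logarithm on $J$ is only locally analytic, not rigid analytic, on an annulus, and this failure is precisely what accounts for the discrepancy between the two integrals. The concrete claim I would prove on $\ker a_0$ is an identity
\[
   \prescript{\Ab}{}{\int_P^Q}\omega \;=\; \prescript{\BC}{}{\int_P^Q}\omega \;+\; c(\omega)\,\bigl(\log t(Q) - \log t(P)\bigr),
\]
where $\log$ denotes the Iwasawa logarithm on $\CC_p^{\times}$ and $c\colon \ker a_0 \to \CC_p$ is a single $\CC_p$-linear functional. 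Since any annulus contains points $P, Q$ with $\log t(Q) \ne \log t(P)$, the vanishing of the right-hand side for \emph{all} such $P, Q$ forces $c(\omega) = 0$. Hence $V = \ker a_0 \cap \ker c$, and so $\codim V \le 2$.

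The main obstacle is establishing this explicit shape of $\prescript{\Ab}{}{\int}\omega$ on the annulus, that is, showing that the failure of the abelian integral to be rigid analytic is captured by a single $\log t$ correction whose coefficient depends $\CC_p$-linearly on $\omega$. I would approach this via the Raynaud--Bosch--L\"utkebohmert uniformization of $J$: over a sufficiently ramified extension $J$ is presented as a quotient, by a lattice, of an extension of an abelian variety with good reduction by a torus, and the abelian logarithm $\log_J$ decomposes correspondingly into a rigid analytic piece coming from the good-reduction summand and a ``toric'' piece built from Iwasawa logarithms of torus coordinates. Pulling back via Abel--Jacobi, the rigid analytic piece agrees with $f_\omega$ up to a constant, while the toric piece contributes exactly a term of the form $c(\omega)\log t$. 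Carrying out this decomposition carefully at the level of differentials on $A_{\ol P}$, and checking that the resulting $c$ is well-defined and $\CC_p$-linear, is the technical heart of the argument.
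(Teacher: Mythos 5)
This statement is cited directly from Stoll's paper (Proposition~7.3 of \cite{stoll2013uniform}); the present paper does not reproduce the proof. Your sketch matches Stoll's actual argument: one codimension comes from killing the $dt/t$-coefficient $a_0(\omega)$, and on $\ker a_0$ Stoll likewise uses the Bosch--L\"utkebohmert--Raynaud uniformization of $J$ to decompose the abelian logarithm into a rigid-analytic piece (which, pulled back along Abel--Jacobi, agrees with the Berkovich--Coleman primitive up to a constant) and a toric piece built from Iwasawa logarithms of torus coordinates, which pulls back on the annulus to a single $c(\omega)\log t$ correction with $c$ linear in $\omega$. So the two conditions $a_0(\omega)=0$ and $c(\omega)=0$ cut out $V$, giving $\codim V\le 2$, exactly as you propose. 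The only point worth flagging is that for the identity
\[
\prescript{\Ab}{}{\int_P^Q}\omega \;=\; \prescript{\BC}{}{\int_P^Q}\omega \;+\; c(\omega)\bigl(\log t(Q)-\log t(P)\bigr)
\]
to hold with a \emph{single} functional $c$, one needs that the torus coordinates of the lift of Abel--Jacobi restrict on $A_{\ol P}$ to $t^{n}\cdot(\text{unit})$ with the unit in the bounded part of the affinoid algebra of the annulus; this is what makes the toric contribution exactly a multiple of $\log t$ plus something rigid analytic, and it is the step you correctly identified as the technical heart.
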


\section{Symmetric power Chabauty}
\label{generalization}
\subsubsection{Notation}
Define the symmetric power
$\Sym^{d}X$ to be the 
quotient of the $d$-fold Cartesian product of $X$ by the 
action of the symmetric group $S_d$. 
A point of $\Sym^d X(\QQ)$ can be represented as a $\Gal(\Qbar/\QQ)$-stable
multiset of $d$ elements of $X(\Qbar)$.

\subsection{Excluding the algebraic special set}
Siksek \cite{siksek} and Park \cite{park} generalized Chabauty's method to the
symmetric power $\Sym^d X$ of a curve $X$ satisfying $r\leq g -d$. 
An immediate problem of 
applying Chabauty's method to higher dimensional varieties is that the Albanese could be trivial. This problem is easily resolved when considering $\Sym^d X$ since its Albanese is the Jacobian of $X$. 
We also need the following assumption.

\begin{assume}
\label{dlessthang}
We need $0 < d < g$.
\end{assume}

This ensures that the image of $\Sym^d X$ under an
Abel--Jacobi map is a proper closed subvariety of the Jacobian of $X$.
Another immediate problem is that $\#\Sym^{d}X(\mathbb{Q})$ may 
be infinite. Park eliminates this problem by 
excluding the points in the algebraic special set 
of $\Sym^d X$.

\begin{defn}
\label{special}
Let $Y$ be a projective variety over $\overline{\mathbb{Q}}$. 
The \textbf{algebraic special set} of $Y$ is the Zariski closure of the union of 
the images of all nonconstant rational maps $f \colon G
\rightarrow Y$ of group varieties $G$ defined over
$\overline{\mathbb{Q}}$ into $Y$. We denote the special set of
$Y$ by $\mathcal{S}(Y)$. For a projective variety $Y$ over
$\mathbb{Q}$, let $\mathcal{S}(Y)$ be the closed subscheme of Y
whose base extension $\mathcal{S}(Y)_{\overline{\mathbb{Q}}}$ is
$\mathcal{S}(Y_{\overline{\mathbb{Q}}})$.
\end{defn}

By a result of Faltings \cite{faltings1994general}, we have that $\#(\Sym^d X \setminus \mathcal{S}(\Sym^d X))(\mathbb{Q})$ is finite, and we can bound it using a Chabauty-like technique.

\subsection{Generalization of Park's setup}
In the remainder of this section, we extend Park's work to  the case of arbitrary reduction modulo $p$. Let $K'$ be the compositum of all extensions of $\Qp$ of degree at most $d$. By the proof of \protect{\cite[Proposition~5.3]{stoll2013uniform}} (which we state in \autoref{partition}), there exists a proper model $\X/\O_{K'}$ of $X_{K'}$ whose residue tubes are the disks and annuli in said proposition. 
Then for any $d \ge 1$, this gives us a reduction map
\begin{align*}
	\red_p\colon (\Sym^d X)(\Qp)&\lrra (\Sym^d \X_s)(\FF_p) \\
    \{Q_1,\dots,Q_d\} &\mapsto \{\red_p'(Q_1),\dots,\red_p'(Q_d)\},
\end{align*}   
where $\red_p'$ denotes the reduction map $X(K') \to 
\X_s(\FF_{p^d})$ given by the model.
This is well-defined because $Q_1,\dots, Q_d \in X(K')$
by definition of $K'$.

We call the preimage of a point $\{\ol P_1,\dots,\ol P_d\}$
under this reduction map a \textbf{residue polytube}. 
If $\ol P_1,\dots,\ol P_d$ are all smooth, we call the
preimage a \textbf{polydisk}.

Consider any $\omega \in H^{0}(J_{\mathbb{Q}_p}, \Omega^{1})$. 
We define the function on $(\Sym^d X)(\Cp)$  
\begin{align*}
    \eta_\omega \colon (\Sym^d X)(\Cp) &\to \Cp \\
    \{Q_1,\dots,Q_d\} &\to \prescript{\Ab}{}{\int_0^{[Q_1 + \cdots + Q_d - O]} \omega},
\end{align*}
where $O$ is some fixed degree $d$ divisor of $X_{\QQ}$, and
the integral is as in \autoref{padicintegral}.
Siksek noted that if the residue polytube is a polydisk, we can
expand $\omega$ on the residue polytube in local coordinates as
\[
    \omega = \omega(u_1, \dots, u_d) = \sum_{i=1}^{d}\omega_i(u_1, \dots, u_d)du_i \in \mathbb{Z}_p[\![u_1, \dots, u_d]\!],
\]
and it can be shown that $\eta_\omega$ is given by the formal 
antiderivative of $\omega$ on residue polydisks.

The main obstruction we face when $p$ is of bad reduction is that
preimages under the reduction map may now be annuli.
We use the Berkovich--Coleman integral because it has a Laurent series representation on a residue annulus. However, we want $d$ integrals which actually equal the abelian integral,
so we work under the rank condition $r \leq g-d-2$ instead of Park's rank condition $r \leq g-d$.

\begin{coro}
\label{choosegoodomegas}
    Let $A_{\ol P}$ be an annulus coming from a node $\ol P$.
    If $r \leq g-d-2$, there exist $d$ linearly independent
    differentials $\omega_1, \dots, \omega_d$ on $J_{\Qp}$ which
    vanish on $\log \ol{J(\QQ)}$, such that for all $\omega$ in the span of $\omega_1,\dots ,\omega_d$
    \[
        \prescript{\Ab}{}{\int \omega} = \prescript{\BC}{}{\int \omega},
    \]
    and the corresponding Laurent series on $A_{\ol P}$
    have no $dt/t$ term.
\end{coro}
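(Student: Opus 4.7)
The plan is a dimension count combining Theorem~\ref{codimensionone} with the standard Chabauty bound on the dimension of the $p$-adic closure $\ol{J(\QQ)}$. Specifically, I will identify two subspaces of $H^{0}(J_{\Qp}, \Omega^{1})$ — the annihilator $W$ of $\log \ol{J(\QQ)}$, and the subspace $V$ of differentials satisfying both of the integrability conditions on $A_{\ol P}$ — show that each has controlled codimension, and then intersect.

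First I would bound $\dim_{\Qp} W$. Since $\ol{J(\QQ)}$ is a $p$-adic Lie subgroup of $J(\Qp)$ of dimension $r' \le r$, its image under $\log$ is a $\Qp$-subspace of $\Lie(J_{\Qp})$ of dimension $r'$, and $W$ is its annihilator under the canonical pairing $\Lie(J_{\Qp}) \times H^{0}(J_{\Qp}, \Omega^{1}) \to \Qp$. Hence $\dim_{\Qp} W \ge g - r$. For the second subspace, Theorem~\ref{codimensionone} gives $\dim V \ge g - 2$. Combining these bounds,
\[
    \dim_{\Qp}(V \cap W) \ge \dim V + \dim W - g \ge (g-2) + (g-r) - g = g - r - 2 \ge d,
\]
where the last inequality invokes the hypothesis $r \le g - d - 2$. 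Picking any $d$ linearly independent $\omega_{1}, \dots, \omega_{d}$ inside $V \cap W$ then yields the corollary, since the conditions defining both $V$ and $W$ are $\Qp$-linear and are therefore inherited by every $\Qp$-linear combination of the $\omega_{i}$.

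The main (and essentially only) nontrivial point I would need to address is the descent of $V$ from $\Cp$ to $\Qp$, since Theorem~\ref{codimensionone} is formally a codimension bound inside $H^{0}(J_{\Cp}, \Omega^{1})$. This is not deep, but it is the one place where the proof cannot treat Theorem~\ref{codimensionone} as a black box: one must use that the two linear conditions cutting out $V$ — the $dt/t$-coefficient of the Laurent expansion on $A_{\ol P}$ and the discrepancy between the abelian and Berkovich--Coleman integrals — come from natural, Galois-equivariant constructions, and therefore restrict to $\Qp$-linear functionals on $H^{0}(J_{\Qp}, \Omega^{1})$ which cut out a $\Qp$-subspace of the same codimension bound. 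Once that verification is in hand, the rest is immediate linear algebra.
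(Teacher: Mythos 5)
Your proof is correct and takes essentially the same approach as the paper's: both invoke Theorem~\ref{codimensionone} to produce a subspace $V$ of dimension at least $g-2$, and then use the rank hypothesis to extract $d$ independent differentials in $V$ that also annihilate $\log \ol{J(\QQ)}$, with the only difference being that you spell out the dimension count $\dim(V\cap W)\ge (g-2)+(g-r)-g = g-r-2\ge d$ that the paper leaves implicit in its final sentence. Your observation that the codimension bound formally lives over $\Cp$ while the corollary needs it over $\Qp$ is a fair point the paper glosses over, and your resolution via Galois-equivariance of the two cutting conditions is the appropriate one.
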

\begin{proof}
    Let $V$ be the subspace of $H^{0}(J_{\mathbb{Q}_p},
    \Omega^{1})$ consisting of all $\omega$ such that
    $\prescript{\Ab}{}{\int_{P}^{Q} \omega} =
    \prescript{\BC}{}{\int_{P}^{Q} \omega}$, and such that
    the Laurent series expansion of $\omega$ has no
    $dt/t$ term.
    By \autoref{codimensionone}, $V$ has codimension at most 2. 
    Note that one codimension comes from the condition of
    having no $dt/t$ term.
    As $\dim
    H^{0}(J_{\mathbb{Q}_p}, \Omega^{1}) = g$, there is a $g-2$
    dimensional subspace on which the integrals are equal. 
	Since $r \leq g-d-2$, we can 
    choose $d$ linearly independent differentials in this subspace
    that vanish on $\log \overline{J(\mathbb{Q})}$.
\end{proof}

\begin{defn}
    We say that a differential $\omega$ is
    \textbf{good} (on $A_{\ol P}$) if the corresponding
    functional on the Lie algebra vanishes on $\log(\overline{J(\mathbb{Q})})$ and the Laurent series
    expansion on $A_{\ol P}$ has no $dt/t$ term.
\end{defn}

In particular, as all the residue expansions of our $\omega_i$ do not
contain a $dt/t$ term, we note that the residue expansions of the
$\eta_i$ can be obtained using Berkovich--Coleman integration, which can be computed by formally anti-differentiating.


The rank condition allows one to find not one good differential but $d$ linearly independent good differentials $\omega_1,\dots,\omega_d$ from which, using
the abelian integral and composition with an Abel--Jacobi map, one can
construct $d$ locally analytic functions $\eta_i
\coloneqq \eta_{\omega_i}$, $1 \le i \le d$, on $(\Sym^d X)(\Cp)$ whose common zero set contains the $\mQ$-rational points of $\Sym^d X$. 

In order to bound the number of common zeros, 
we make the same assumption made by Park.
Let $\Lambda_X$ denote the $d$-dimensional vector space spanned by the $d$ linearly independent forms $\omega_1,\dots ,\omega_d$. Define 
\[
    (\Sym^{d}X)^{\Lambda_X} \coloneqq \brk{P \in (\Sym^d X)(\mC_p) : \eta_\omega(P) = 0, \, \forall\, \omega \in \Lambda_X}.
\] 


\begin{assume}[\protect{\cite[Assumption~1.0.7]{park}}]
\label{zeroDimAssumption}
The positive dimensional components of $(\Sym^{d}X)^{\Lambda_X}$ are contained in the $\CC_p$ points of the algebraic special set of $\Sym^{d}X$.
\end{assume}

\begin{lemma}[\protect{\cite[Corollary~4.2.4]{park}}]
Under Assumptions \ref{dlessthang} and \ref{zeroDimAssumption}, we have that the $\QQ$-rational points of $(\Sym^d X) \setminus \mathcal S(\Sym^d X)$ are contained in the zero-dimensional components of $(\Sym^d X)^{\Lambda_X}$.
\end{lemma}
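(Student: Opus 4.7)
The plan is to show directly that any $\mQ$-rational point of $\Sym^d X$ lying outside $\mathcal S(\Sym^d X)$ automatically lies in $(\Sym^d X)^{\Lambda_X}$, and then use Assumption \ref{zeroDimAssumption} to exclude the possibility that such a point sits on a positive-dimensional component. The main work is Step 1; Step 2 is essentially definitional.

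Step 1 (membership in $(\Sym^d X)^{\Lambda_X}$). Let $P=\{Q_1,\dots,Q_d\} \in (\Sym^d X)(\mQ)$. Since $P$ is Galois-stable the divisor $Q_1+\cdots+Q_d$ is defined over $\mQ$, and because the base divisor $O$ was chosen to be a $\mQ$-rational divisor of degree $d$ on $X$, the class
\[
    [Q_1+\cdots+Q_d - O] \in J(\mQ) \subseteq \overline{J(\mQ)}.
\]
By construction, every $\omega \in \Lambda_X$ is a differential on $J_{\mQ_p}$ whose associated functional on $\Lie(J)$ vanishes on $\log \overline{J(\mQ)}$. Combining this with the definition of the abelian integral $\prescript{\Ab}{}{\int_0^{[Q_1+\cdots+Q_d-O]}\omega} = \langle \log[Q_1+\cdots+Q_d-O],\omega\rangle$ gives $\eta_\omega(P)=0$ for every $\omega\in\Lambda_X$. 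Hence $P \in (\Sym^d X)^{\Lambda_X}$. (Assumption \ref{dlessthang} is used implicitly, as it guarantees that the Abel--Jacobi map realizes $\Sym^d X$ as a proper closed subvariety of $J$, so that the pullback of differentials and the construction of $\Lambda_X$ are meaningful.)

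Step 2 (zero-dimensionality). Suppose for contradiction that $P$ lies on a positive-dimensional irreducible component $W$ of $(\Sym^d X)^{\Lambda_X}$. By Assumption \ref{zeroDimAssumption}, $W \subseteq \mathcal S(\Sym^d X)(\mC_p)$, and in particular $P \in \mathcal S(\Sym^d X)(\mC_p)$. Since $\mathcal S(\Sym^d X)$ is a closed subscheme of $\Sym^d X$ defined over $\mQ$ and $P$ is $\mQ$-rational, this forces $P \in \mathcal S(\Sym^d X)(\mQ)$, contradicting the hypothesis $P \in (\Sym^d X \setminus \mathcal S(\Sym^d X))(\mQ)$. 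Therefore $P$ lies in a zero-dimensional component of $(\Sym^d X)^{\Lambda_X}$.

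There is no real obstacle here: the argument is a direct consequence of the construction of $\Lambda_X$ (which kills the $p$-adic logarithm of $J(\mQ)$) together with Assumption \ref{zeroDimAssumption}. The only subtlety worth flagging is that one must have an honest $\mQ$-rational divisor $O$ of degree $d$ to realize $\Sym^d X \to J$ over $\mQ$ and thereby ensure $[Q_1+\cdots+Q_d-O] \in J(\mQ)$; this is already built into the setup.
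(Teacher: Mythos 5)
Your proof is correct. The paper itself does not spell out an argument here---it simply cites Park's Corollary~4.2.4---but the argument you give is exactly the one underlying that citation (and the classical Chabauty philosophy extended to symmetric powers): Step~1 uses that $\Lambda_X$ consists of differentials whose functionals kill $\log\overline{J(\QQ)}$, that $O$ is a $\QQ$-rational divisor, and that a Galois-stable multiset $\{Q_1,\dots,Q_d\}$ yields $[Q_1+\cdots+Q_d-O]\in J(\QQ)\subseteq\overline{J(\QQ)}$, so every $\eta_\omega$ vanishes on $(\Sym^d X)(\QQ)$; Step~2 then invokes Assumption~\ref{zeroDimAssumption} and the fact that $\mathcal S(\Sym^d X)$ is defined over $\QQ$ to rule out landing on a positive-dimensional component. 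Both steps are sound, and your parenthetical remark about where Assumption~\ref{dlessthang} enters (it makes the Abel--Jacobi image proper so that the whole $\Lambda_X$ construction is non-vacuous) is a fair account of its role.
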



If we suppose \autoref{zeroDimAssumption},
the rational points of $\Sym^2 X$ outside of its
special set correspond to zero-dimensional components of
the common vanishing locus of the $\eta_i$, so it suffices
to bound these.  

\begin{remark}
	By \cite[Lemma~4.1]{gunther2018irrational}, \autoref{zeroDimAssumption} always holds when $J$ is geometrically simple and $J(\QQ)$ has rank at most $1$.
\end{remark}

\subsection{Expressing integrals as pure power or Laurent series}
The first step is to rewrite the multivariate Laurent series without any
mixed monomial terms.

\begin{defn}
We define a \textbf{pure} power (resp. Laurent) series to be a power (resp. Laurent) series with no mixed monomial terms.
\end{defn}

Siksek showed that on residue polydisks, we can write the integrals $\eta_i$ as pure power series. 
The same argument shows that on any residue polytube, we express the integrals $\eta_i$ as pure Laurent series. 
Consider the residue polytube $U \subseteq (\Sym^d \X_s)(\Cp)$ over $\{P_1,\dots,P_d\} \in (\Sym^{d}X)(\ol{\mathbb{F}_p})$. 
Let $U_i$ be the individual residue tube over $P_i$, and
let $t_i$ be a uniformizer which induces an analytic isomorphism from $U_i$ to a disk or annulus. 
Let $\{Q_1,\dots,Q_d\}$, $\{Q'_1,\dots,Q'_d\} \in U$ such that
$Q_i, Q_i' \in U_i$. Then, by translation invariance of $\omega$,
we can write:

\begin{align*}
\eta(\{Q_1, \dots, Q_d\}) & = \int_{O}^{\{Q_1,\ldots,Q_d\}} \omega \\
\displaybreak[0]
 & = \int_{0}^{[Q_1+\dots+Q_d - O]} \omega\\
 \displaybreak[0]
 & = \int_{O}^{\{Q_1',\dots,Q_d'\}} \omega + \int_{Q'_1}^{Q_1} \omega + \dots + \int_{Q'_d}^{Q_d} \omega \\
 \displaybreak[0]
 & = C + \int_{t_{1}(Q'_1)}^{t_{1}(Q_1)} (t_1^{-1})^\ast\omega + \dots + \int_{t_{d}(Q'_d)}^{t_{d}(Q_d)} (t_d^{-1})^\ast \omega, \\
\end{align*}
where $C = \int_O^{\{Q_1',\dots,Q_d'\}} \omega$ is a constant.
We have shown that if we choose $\omega$ as in \autoref{choosegoodomegas} then 
$\int_{t_{i}(Q'_i)}^{t_{i}(Q_i)} (t_i^{-1})^\ast\omega = f_i(t_i)$ for some one variable power or Laurent series $f_i$. Then
\[
	\eta(t_1,\dots,t_d) = C +  \sum_{i=1}^{d}  f_i(t_i).
\]
Thus we have obtained a pure representation of our
Laurent or power series. Note that this pure series is a function on 
$U_1\times\cdots\times U_d \subseteq X^d(\CC_p)$ not $(\Sym^d X)(\CC_p)$.

\begin{remark}
Note that by \protect{\cite[Lemma~2.3]{siksek}},
if $U_i$ is a $p$-adic disk, we can choose $t_i$ such that the power series expansion of the integral on $U_i$ has no
constant term, i.e. it vanishes at $0$. We call this  
``centering the disk.'' 
\end{remark}



Combining this section with \autoref{choosegoodomegas} gives the following generalization of \protect{\cite[Proposition~3.2.2]{park}}. 
The second part relates the number of zeros of these functions to the number of rational points in the residue
polytube. 


\begin{prop}
\label{setup}  
    For each residue polytube
    $U \subseteq (\Sym^d X)(\Qp)$,
    let $K$ be the compositum of the fields of definition of all of the points in $U$. 
    Then 
    there exist good
    $\omega_1,\dots,\omega_d \in H^0(J_{\QQ_p}, \Omega^1)$,
    such that for each $1\le i\le d$ there exists a pure
    Laurent series or power series $\eta_i$ 
    with coefficients in $K$ 
    satisfying 
    \[
        \int_{Q_1}^{Q_2}\omega_i = \eta_i(t(Q_2)),
    \]
    for some $Q_1 \in U$ and all $Q_2 \in U$, where
    $t$ is an analytic isomorphism from $U$ to a product of  disks and annuli. If 
    $$\redu_p(Q_1) = \redu_p(Q_2) = \{P_1,\dots,P_d\} = 
    \bigcup_{i=1}^r \mathcal S_i$$
    where $\mathcal S_i$ consists
    of $s_i$ copies of the same point
    in $\X_s(\Fp^d)$, 
    then each point in $U \cap \Sym^d X(\QQ)$ corresponds to $N = \prod_{i=1}^r (s_i)!$ 
    common zeros of $\eta_1,\dots,\eta_d$.
\end{prop}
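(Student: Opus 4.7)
The plan is to combine \autoref{choosegoodomegas} with the pure-series computation displayed immediately before the proposition, and then to carefully relate common zeros of the resulting series on a product of residue tubes to $\mQ$-rational points of $\Sym^d X$.

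First I would build good differentials adapted to the whole polytube $U$. Write $U$ as the image in $\Sym^d X$ of a product $U_1 \times \cdots \times U_d$ whose $k$ annular factors correspond to the nodes appearing in $\{P_1,\dots,P_d\}$. I would then apply \autoref{codimensionone} to each annular factor, obtaining a codimension at most $2$ subspace of $H^0(J_{\QQ_p},\Omega^1)$ on which the abelian and Berkovich--Coleman integrals agree and whose Laurent expansion has no $dt/t$ term. Intersecting these $k$ subspaces with the subspace of differentials vanishing on $\log\overline{J(\mQ)}$ (of codimension $r$) leaves a subspace of codimension at most $r+2k$, from which under the appropriate rank hypothesis I extract $d$ linearly independent good differentials $\omega_1,\dots,\omega_d$.

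Next, fixing a base point $Q_1 \in U$ and a $K$-rational uniformizer $t_j$ on each factor $U_j$, the calculation just before the proposition gives
\[
    \eta_i(Q_2) = C_i + \sum_{j=1}^d f_{i,j}\bigl(t_j(Q_2)\bigr)
\]
for all $Q_2 \in U$, where each $f_{i,j}$ is a one-variable pure power or Laurent series obtained by formally anti-differentiating the pullback of $\omega_i$ to $U_j$; the absence of a $dt/t$ term in the annular case is precisely what makes $f_{i,j}$ exist as a Laurent series. Absorbing $C_i$ into the constant term of one of the $f_{i,j}$ realizes $\eta_i$ as a pure series in $t_1,\dots,t_d$ with coefficients in $K$ that vanishes at $t(Q_1)$ as required by the normalization in the statement.

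For the counting, I would note that the $\eta_i$ factor through the Abel--Jacobi map and hence depend only on the underlying multiset, so a rational point $P \in U \cap \Sym^d X(\mQ)$ has Abel--Jacobi image in $J(\mQ)\subseteq\overline{J(\mQ)}$ and is annihilated by every $\eta_i$. Its preimages in the product $U_1\times\cdots\times U_d$ are obtained by permuting within each equi-residue block $\mathcal S_j$, which gives $\prod_{j=1}^r s_j!$ distinct ordered tuples, each of which is a common zero of $\eta_1,\dots,\eta_d$. The hard part will be the first step: unlike \autoref{choosegoodomegas}, the no-$dt/t$ condition must be imposed simultaneously on every annular tube in $U$, which sharpens the rank hypothesis to roughly $r \le g - d - 2k$ when $k$ annular tubes are present; handling polytubes with several annuli will require additional care in later sections, though for the present proposition the bookkeeping is routine once $g-d-r \ge 2k$.
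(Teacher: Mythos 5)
Your reconstruction matches the paper's intended argument: the paper's ``proof'' of \autoref{setup} is the one-line remark citing \autoref{choosegoodomegas} together with the pure-series computation displayed just above the proposition, and your treatment of the counting step (permuting within equi-reduction blocks to get $\prod s_i!$ ordered preimages in $U_1\times\cdots\times U_d$) is correct. So as a proof proposal this is on the right track.

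The genuinely valuable content, though, is the caveat you raise at the end, and it points at a gap in the \emph{paper}, not in your write-up. \autoref{choosegoodomegas} produces $d$ linearly independent differentials that are good on \emph{one fixed} annulus $A_{\ol P}$; \autoref{setup} needs $\omega_1,\dots,\omega_d$ that are simultaneously good on \emph{every} annular factor of $U_1\times\cdots\times U_d$. When the polytube contains $k$ annular factors over distinct nodes, the no-$dt/t$ conditions are codimension-$1$ constraints attached to different nodes and are a priori independent, so your worst-case count of $r+2k$ (rather than $r+2$) for the codimension of the space of good Chabauty forms is the honest one, and it forces $r\le g-d-2k$ rather than the $r\le g-d-2$ the paper uses. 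In the paper's own analysis of $d=2$ this bites precisely in Case 3(b), where $P_1\ne P_2$ are distinct $\FF_p$-nodes: two independent $dt/t$-constraints plus the two abelian-versus-Berkovich--Coleman constraints give codimension up to $4$, and then $r\le g-4$ is not visibly enough to extract two linearly independent good forms. Case 1(b) is safer than you suggest, since $P_1=P_2^\sigma$ are Frobenius-conjugate and $\omega$ is $\Qp$-rational, so the residue at $\ol P^\sigma$ is the Galois conjugate of the residue at $\ol P$ and the two constraints coincide; and in Case 2(b) the two factors are literally the same annulus. But absent an argument that the constraints also collapse in Case 3(b), the proposition as stated does not follow from \autoref{choosegoodomegas} alone, and the rank hypothesis propagated to Theorems A and B should be examined. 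Your flag is well-founded.
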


\section{Partitioning $(\Sym^2 X)(\QQ_p)$ into disks and annuli}
\label{partition}

In this section, we  compute how many  residue disks and annuli cover $(\Sym^2 X)(\QQ_p)$. 
First, we recall Stoll's results about partitioning into disks and annuli, stated below.
\begin{prop}[\protect{\cite[Proposition~5.3]{stoll2013uniform}}]
    \label{stoll53}
    Let $X$ be any smooth projective geometrically integral curve of genus $g$ over a $p$-adic field $k/\mathbb{Q}_p$, and let $q$ be the size of the residue field. Then there is a number $0 \leq t \leq g$ such that $X(k)$ can be written as a disjoint union of the set of $k$-points of at most $(5q+2)(g-1)-3q(t-1)$ open disks and at most $2(g-1)+(t-1)$ open annuli in $X$.
\end{prop}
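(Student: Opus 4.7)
The strategy is to choose a proper model of $X$ whose special fiber has only smooth points and nodes as possible reductions of $k$-rational points, and then count these via combinatorics of the dual graph. Starting from a proper regular model $\mathcal X/\mathcal O_k$, which exists by Lipman's theorem on resolution of singularities for arithmetic surfaces, I would contract every maximal chain of smooth rational components in the special fiber to obtain a proper model $\mathcal X'/\mathcal O_k$ with the property that $\mathcal X'_s$ has only nodes as singularities and every $k$-point of $X$ reduces to a smooth point or a node of $\mathcal X'_s$. By \cite[Proposition~2.3]{Bosch1985}, quoted earlier in the excerpt, the corresponding residue tubes are $p$-adic open disks and $p$-adic open annuli respectively, so it suffices to count them.

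Let $\tilde C_1,\ldots,\tilde C_c$ be the geometric components of $\mathcal X'_s$ with geometric genera $g_i$, and let $n$ be the number of geometric nodes. The genus formula for semistable curves, $g = \sum_i g_i + h^1(G)$ where $G$ is the dual graph of $\mathcal X'_s$, lets me set $t := h^1(G) = n - c + 1$, which automatically satisfies $0 \le t \le g$. The number of annuli is bounded by $n = t + c - 1$, and a handshake-lemma argument on $G$, using that every surviving rational component meets at least three nodes (otherwise it would be a leaf or the interior of a chain subject to further contraction), gives $c \le 2(g-1)$, hence $n \le 2(g-1) + (t-1)$. For the disks, I would apply the Weil bound $|\tilde C_i(\mathbb F_q)| \le q + 1 + 2 g_i \sqrt q$ to each component, use the elementary estimate $\sqrt q \le (q+1)/2$, subtract the pre-images of nodes under the normalization, and insert the bound on $c$; summing and simplifying should yield the desired $(5q+2)(g-1) - 3q(t-1)$.

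The main obstacle will be the bookkeeping needed to land on the precise constants. First, the counts must be done $\Gal(\bar{\mathbb F}_q/\mathbb F_q)$-equivariantly, since only $\mathbb F_q$-rational smooth points and $\mathbb F_q$-rational nodes produce residue tubes containing $k$-points; split and non-split nodes, as well as Galois-conjugate geometric components of each fixed genus, contribute differently to the two counts. Second, a proper model over $\mathcal O_k$ with geometrically reduced special fiber need not exist without extending $k$, so one must either absorb a base change and descend the $k$-point count, or adapt the local description in \cite[Proposition~2.3]{Bosch1985} to non-reduced fibers and keep track of component multiplicities. Once these subtleties are resolved, the bound follows by optimizing the trade-off in which each additional node, i.e.\ each unit increase in $t$ with all else held fixed, reduces the disk count by roughly $3q$ while increasing the annulus count by $1$.
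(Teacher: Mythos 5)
The paper does not prove this statement; it quotes \cite[Proposition~5.3]{stoll2013uniform} verbatim and uses it as a black box, so there is no internal proof for your attempt to track. Your reconstruction follows the spirit of Stoll's argument (regular model, contraction of rational chains, residue tubes are disks/annuli by Bosch--L\"utkebohmert, counting via the genus formula and Weil bounds), but as written it has gaps that you partly flag yourself and that are not cosmetic.

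The central problem is the genus formula. You invoke $g = \sum_i g_i + h^1(G)$, which is the formula for a \emph{semistable} (in particular reduced) curve, but the special fiber of the minimal proper regular model over $\mathcal{O}_k$ is in general non-reduced and its singularities need not be nodal; contracting chains of $(-2)$-curves does not make it so. Stoll's argument has to confront this: either it works directly with the multiplicity-one locus of the regular model (where the $k$-points are forced to reduce, since a section of a regular arithmetic surface is a Cartier divisor meeting the special fiber transversally in its smooth locus), or it passes to a semistable model over a finite extension and then descends; in either case one must relate $g$ to the number of multiplicity-one components and the loops among them, which your formula does not do. You acknowledge this obstacle in your last paragraph, but acknowledging it is not the same as resolving it -- it is precisely where the value of $t$ is pinned down.

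The combinatorial step is also not correct as stated. The handshake inequality $2n \ge 3c$, hence $c \le 2(g-1)$, is derived under the assumption that ``every surviving rational component meets at least three nodes.'' After contraction this holds only for genus-zero components, and only if the model is stable rather than merely semistable; positive-genus components can meet a single node (or, in a connected curve with several components, as few as one), and their contribution to the genus formula is through $\sum g_i$, not through the degree in the dual graph. One needs to split $c$ into genus-zero and higher-genus pieces and use both parts of the genus formula simultaneously; your two-line estimate skips this and will not give $n \le 2(g-1) + (t-1)$ in general. Finally, the disk count is genuinely hand-waved: ``summing and simplifying should yield $(5q+2)(g-1)-3q(t-1)$'' does not follow from $|\tilde C_i(\mathbb{F}_q)| \le q + 1 + g_i(q+1)$ without a careful accounting of how many normalization preimages of $\mathbb{F}_q$-rational nodes get subtracted (which differs for split versus non-split nodes), and this bookkeeping interacts with the Galois-equivariance issue you raise. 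So the strategy is right, but the genus-formula input, the component bound, and the constant-chasing all require work that the proposal does not carry out.
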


\subsubsection{Notation}
Recall that $p$ is an odd prime.
Note that $\mathbb{Q}_p$ only has three quadratic extensions, one of which is unramified and two of which are ramified. Let $\QQ_{p^2}$ denote the unramified extension of $\Qp$, and let $K_1$ and $K_2$ denote the other two ramified extensions.
Let 
\begin{align*}
D_1 &= (5p+2)(g-1) - 3p(t-1), \\
D_2 &= (5p^2 + 2)(g-1) - 3p^2(t-1), \\
\alpha &= 2(g-1) + (t-1). 
\end{align*}
Finally, let $P^\sigma$ denote the Galois conjugate
of a quadratic point $P$.

\subsection{Disks and annuli in the symmetric square case}
Let $X$ be a curve with rank $r \le g - 4$.
For each point $\{P_1, P_2\} \in (\Sym^{2}\X_s)(\mathbb{F}_p)$,
we fix an ordering and take the preimage in 
$X \times X$, which is a product of disks or annuli.

For our purposes, we need to understand the possible reduction types in the $d=2$ case.
A point in $\Sym^2 X(\QQ_p)\setminus \mathcal{S}(\Sym^2 X(\QQ_p))$ could reduce to one of the following:
\begin{enumerate}
\item
\begin{enumerate}
    \item $P_1 = P_2^\sigma \in \X_s(\FF_{p^2})\setminus \X_s(\FF_p)$ and
    both are smooth points on the special fiber;
    \item $P_1 = P_2^\sigma \in \X_s(\FF_{p^2})\setminus \X_s(\FF_p)$ and
    both are nodes on the special fiber;
\end{enumerate}
\item
\begin{enumerate}
    \item $P_1 = P_2 \in \X_s(\FF_p)$ is a smooth point on the special fiber;
    \item $P_1 = P_2 \in \X_s(\FF_p)$ is a node on the special fiber;
\end{enumerate}
\item
\begin{enumerate}
    \item $P_1 \neq P_2$, $P_1, P_2 \in \X_s(\FF_p)$ and both are smooth points on the special fiber;
    \item $P_1 \neq P_2$, $P_1, P_2 \in \X_s(\FF_p)$ both are nodes on the special fiber;
    \item $P_1 \neq P_2$, $P_1, P_2 \in \X_s(\FF_p)$ one is a node and the other
    is a smooth point on the special fiber.
\end{enumerate}
\end{enumerate}


Using Stoll's bounds on the number of residue disks and annuli,
we get the following results. We elaborate on this table below.
\renewcommand{\arraystretch}{2}
\begin{longtable}{| l | l | l | }\hline
    Case & Number of Disks/Annuli & Type of preimage \\\hline\hline
    1(a) & $\frac12 D_2$
    & $p\O_{\QQ_{p^2}} \times p\O_{\QQ_{p^2}}$
    \\\hline
    1(b) & $\frac12 \alpha$ & $A(\QQ_{p^2}) \times A(\QQ_{p^2})$\\\hline
	2(a) & $D_1$ & $p_{K_i} \O_{K_i} \times p_{K_i}\O_{K_i}$ \\\hline
    2(b) & $\alpha$ & $A(K_i) \times A(K_i)$ 
    \\\hline
    3(a) & $\displaystyle{\binom{D_1}{2}}$ & 
    $p\ZZ_p \times p\ZZ_p$ 
    \\\hline\noalign{\penalty-5000}\hline
    3(b) & $\displaystyle{\binom{\alpha}{2}}$ &$ A(\Qp) \times A(\Qp)$ \\\hline
    3(c) & $D_1\alpha$
     & $p\ZZ_p \times A(\Qp)$ 
     \\\hline
\caption{Partition data for $\Sym^2(\Qp)$}
\end{longtable}

\noindent\textbf{Case 1.} In the first case, $q = p^2$. Since $P$ determines
$P^\sigma$, we are counting pairs of conjugate
residue disks / annuli. Because each point has a distinct conjugate that is not itself, and is in fact in a different residue tube, we simply multiply Stoll's
bound by $\frac{1}{2}$. 
\\ \\
\textbf{Case 2.}
In the second case, $q = p$ because points which reduce to
$\FF_p$ must lie in a ramified extension of $\Qp$.
Thus, from \emph{each of the two ramified extensions} of
$\QQ_p$, we get
    $D_1$ disk $\times$ disk, and
    $\alpha$ annulus $\times$ annulus.
However, note that in the second case, there is a 
two-to-one correspondence between common zeros
of $\eta_1$ and $\eta_2$ and points in the residue
polydisk/polyannulus.
\\ \\
\textbf{Case 3.}
In the third case, $q = p$, and disk
$\times$ annulus is possible. We get 
    $\binom{D_1}{2}$ disk $\times$ disk,
    $\binom{\alpha}{2}$ annulus $\times$ annulus, and 
    $D_1\alpha$ disk $\times$ annulus.
    
We obtain bounds on each case by bounding the number of zeros in each case and multiplying it by the number of disks and annuli for that case. We then sum up these bounds to achieve a global bound.

\section{Bounding zero-dimensional components}
\label{boundingzerodimcomponents}

In this section, we briefly recall background on tropical analytic geometry, in particular Rabinoff's \protect{\cite{rabinoff}} theory of polyhedral subdomains. We conclude by discussing relevant lemmas from \protect{\cite{stoll2013uniform}} and \protect{\cite{katz2015uniform}} concerning the Laurent series representations of the $p$-adic integrals.

\subsubsection{Notation}
In this section, let the $K$ denote the field $\mathbb{C}_p$. 
Let $B_K^1 = \Sp K\langle x_1 \rangle$, where $\Sp$ is the functor that takes a quotient of a Tate algebra to a rigid analytic space whose points consist of the maximal spectrum of the quotient of the Tate algebra.

\subsection{Tropical analytic geometry}
In \protect{\cite{rabinoff}}, Rabinoff expounded on the relationship between tropical and rigid analytic geometry. Roughly speaking, for a ``nice'' polyhedron $P \subseteq \mathbb{R}^n$, Rabinoff defined an affinoid open sub-domain $\U(P)$ of the analytification of an affine toric variety associated to combinatorial data attached to $P$.
\begin{defn}
	We refer the reader to \protect{\cite[Notation~2.2, Definition~2.3 parts (i) and (iv)]{rabinoff}}
    for the definition
    of integral $\Gamma$-affine polyhedron, and to \textit{loc.~cit.} Definition  3.12 for the cone of unbounded directions.
\end{defn}
Throughout this section, let $P$ be an integral $\Gamma$-affine polyhedron in $\RR^d$.
\begin{defn}
     Let $\sigma$ be
    the cone of unbounded directions of $P$, and let
    $S_P = \sigma^\vee \cap \ZZ^d$. Then define
    \[
        K\langle \U(P)\rangle \coloneqq
        \left\{
            \sum_{u \in S_P} a_ux^u \biggr\rvert a_u \in K,
            v(a_u) + \langle u, v \rangle \to \infty 
            \text{ for all } v \in P
        \right\}.
    \]
    (This means that for fixed $v \in P$ and
    $N$, there are only finitely many
    $u \in S_p$ such that $v(a_u) + \langle u, v\rangle < N$.)
    
    By \protect{\cite[Proposition~6.9]{rabinoff}}, $K\langle\U(P)\rangle$
    is a $K$-affinoid algebra. One can think of $K\langle\U(P)\rangle$ as the set of power series that converge on the points of $K^n$ whose valuation belongs to the polyhedron $P$.
\end{defn}

\begin{exam}
    For $P = \{(x_1,\dots,x_d) \in \RR^d \mid x_i \ge m_i
    \text{ for } 1 \le i \le d, m_i \in \QQ_{> 0}\}$,
    $K\langle \U(P)\rangle$ is isomorphic to the usual Tate algebra
    over $K$.
\end{exam}

\begin{exam}
    If $P = \prod_{i=1}^n [r_i, s_i]$, then $K\langle \U(P)\rangle$
    is the algebra of Laurent series which converge on the points
    with valuation in $P$. \footnote{There is an error in \protect{\cite[Example~6.8]{rabinoff}}. The indexing of the summation should
    be over $\ZZ^n$.}
\end{exam}

\begin{defn}
    Given nonzero $f \in K\langle \U(P)\rangle$, define
    \[
        H(f) \coloneqq \{(u, v(a_u)) \mid u \in S_P, a_u \neq 0\}.
    \]
    We call this the \textbf{height graph} of $f$ with respect to $0$.
    For $w \in P$, define
    \[
        m_w(f) \coloneqq \min \{ (w, 1) \cdot (u, v(a_u)) \mid
        (u, v(a_u)) \in H(f) \}.
    \]
    Let \footnote{This is different from Rabinoff's definition since we use different sign conventions on the exponents of the Laurent series.}
    \[
        \vrt_w(f) \coloneqq \{(u, v(a_u) \mid u \in S_P, (w,1) \cdot (u, v(a_u)) = m_w(f) \}. 
    \]
    Define
    \[
    	\vrt_P(f) \coloneqq \bigcup_{w \in P} \vrt_w(f).
    \]
\end{defn}

Geometrically, $\vrt_w(f)$ can be thought of as the lower faces of the regular subdivision given by the valuation.

\begin{defn}
    For any $f \in K\langle \U(P)\rangle$, define
    \[
        \Trop(f) = \ol {\{ w \in P \mid \# \vrt_w(f) > 1 \}}.
    \]  
\end{defn}

Note that the above is not the classical definition of $\Trop(f)$, however it is equivalent to the classical definition by fundamental theorem of tropical geometry, which is \protect{\cite[Theorem~3.2.5]{maclagan2015introduction}}.

\begin{defn} 
    For $f_1,\dots,f_n \in K\langle\U(P)\rangle$, define
    \[
        V(f_1,\dots,f_n) \coloneqq \Sp(K\langle\U(P)\rangle/(f_1,\dots,f_n)).
    \]
\end{defn}

\begin{defn}
    Let $f_1,\dots,f_d \in K\langle\U(P)\rangle$, $Y_i = V(f_i)$, and $Y = \bigcap_{i=1}^d Y_i$. 
    Assume that $Y$ is zero-dimensional. 
    Then the intersection multiplicity of $Y_1,\dots, Y_d$
    at $w$ is defined as
    \[
        i(w, Y_1,\dots,Y_d) 
        = \dim_{K}H^{0}(Y \cap U_{\{w\}}, \mathcal{O}_{Y \cap U_{\{w\}}})
    \]
    where we view $\{w\}$ as a zero-dimensional polytope. 
    In simpler terms, this intersection multiplicity at $w$ 
    is the number of common zeros of the $f_i$ that have the 
    same coordinate-wise valuation as $w$, counted with
    multiplicity. 
    
    Also define $\gamma_w(f_i) = \pi(\conv(\vrt_w(f_i)))$,
    where $\pi \colon \RR^d \times \mathbb{R} \to \mathbb{R}^d$ is the projection
    onto the first coordinate, and $\conv$ denotes the convex
    hull of a set of points in $\RR^{d+1}$. 
    Let $\gamma_i = \gamma_w(f_i)$ when
    it is not ambiguous.
\end{defn}

\begin{defn}[Mixed volume]
    Let $P_1,\dots,P_d$ be bounded polyhedra in $\RR^d$.
    Define the function
    \[
        f(\lambda_1,\dots,\lambda_d)
        \coloneqq
        \vol(\lambda_1P_1 + \cdots + \lambda_dP_d),
    \]
    where $+$ is the Minkowski sum. The
    \textbf{mixed volume}
    of the $P_i$, denoted
    $\MV(P_1,\dots,P_d)$ is defined as the coefficient of the
    $\lambda_1\cdots\lambda_d$ term of
    $f(\lambda_1,\dots,\lambda_d)$.
\end{defn}

\begin{theorem}[\protect{\cite[Theorem~11.7]{rabinoff}}]
    \label{countmult}
    Suppose $f_1,\dots, f_d \in K\langle\U(P)\rangle$ have
    finitely many common zeros, and let $w \in \bigcap_{i=1}^d \Trop(f_i)$
    be an isolated point in the interior of $P$. Let $\gamma_i$ be as above.
    Let $Y_i = V(f_i)$. Then
    \[
        i(w, Y_1,\dots, Y_d) = \MV(\gamma_1,\dots,\gamma_d).
    \]
\end{theorem}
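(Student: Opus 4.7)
The plan is to prove this via a non-Archimedean version of the classical Bernstein--Kushnirenko--Khovanskii (BKK) theorem, which states that the number of common zeros (counted with multiplicity) in an algebraic torus of a system of Laurent polynomials with Newton polytopes $\gamma_1,\dots,\gamma_d$ equals the mixed volume $\MV(\gamma_1,\dots,\gamma_d)$, provided the system is non-degenerate. The goal is to reduce the rigid-analytic statement about $f_i \in K\langle \U(P)\rangle$ to a statement about initial forms over the residue field, and then apply BKK.

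First, I would localize the problem. After a translation in the cocharacter lattice, I may assume $w$ lies in the interior of $P$. Using the isolation hypothesis together with Rabinoff's theory of polyhedral subdomains, one can refine $P$ to a small neighborhood $P'$ of $w$ on which $w$ is the unique point of $\bigcap_i \Trop(f_i|_{\U(P')})$. The affinoid algebra $K\langle \U(P')\rangle$ is then small enough that the intersection $Y \cap U_{\{w\}}$ depends only on the ``lowest order'' behavior of the $f_i$ at $w$.

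Second, I would pass to initial forms. To each $f_i$ one associates an initial form $\init_w(f_i)$: a Laurent polynomial over the residue field $\tilde K$ whose support is precisely $\pi(\vrt_w(f_i))$, so its Newton polytope is $\gamma_i$. The key technical step is to establish, via a flatness / formal-models argument, the identity
\[
  i(w, Y_1,\dots, Y_d) \;=\; \#\bigl\{\text{common zeros of } \init_w(f_1),\dots,\init_w(f_d) \text{ in } (\tilde K^\times)^d\bigr\},
\]
counted with multiplicity. This is the heart of the theorem and uses that zeros of $f_i$ with coordinate-wise valuation exactly $w$ are in bijection (after reduction) with zeros of $\init_w(f_i)$ in the torus of the residue field; the finitely-many-common-zeros hypothesis and the isolation of $w$ in the tropical intersection guarantee that no zeros ``escape to infinity'' on the special fiber.

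Third, I would invoke BKK for the system $\init_w(f_1),\dots,\init_w(f_d)$. The isolation of $w$ inside $\bigcap_i \Trop(f_i)$ translates into the statement that the initial system has no zeros on the toric boundary of the polarized toric variety built from $\gamma_1+\cdots+\gamma_d$, which is precisely the non-degeneracy hypothesis needed to apply BKK with equality. Combining this with the previous step yields $i(w, Y_1,\dots,Y_d) = \MV(\gamma_1,\dots,\gamma_d)$. The main obstacle is the rigorous justification of the middle step in the affinoid setting: one has to ensure that passing from the possibly-infinite Laurent expansions in $K\langle\U(P)\rangle$ to the finite polynomial $\init_w(f_i)$ preserves intersection multiplicities at $w$, and this is where Rabinoff's machinery of formal models of polyhedral subdomains does the essential work.
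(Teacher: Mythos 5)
This result is cited in the paper from Rabinoff and not proved there, so the comparison must be against \cite{rabinoff} itself. Your outline correctly identifies the overall strategy --- localize around $w$, pass to initial forms over the residue field, and invoke a Bernstein--Kushnirenko count --- and this is indeed the shape of the argument. However, as you yourself acknowledge, the heart of the matter is your middle step, and it is genuinely nontrivial: one must exhibit the initial degeneration as a \emph{flat, finite} family over the valuation ring (using Rabinoff's formal models of polyhedral subdomains), so that $\dim_K H^0(Y\cap U_{\{w\}},\cO_{Y\cap U_{\{w\}}})$ on the generic fiber agrees with the length of $V(\init_w f_1,\dots,\init_w f_d)$ over the residue torus on the special fiber. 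Flatness here is not automatic; it fails exactly when common roots escape to the toric boundary under specialization, and establishing that this does not happen is where the isolation of $w$ actually enters.

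Your fourth step also needs a real argument rather than the one-line assertion you give. You claim that isolation of $w$ in $\bigcap_i\Trop(f_i)$ forces the initial system to have no common zeros on the toric boundary, i.e.\ the non-degeneracy needed for BKK to hold with equality. This is true, but the translation requires identifying the star of $\Trop(f_i)$ at $w$ with the tropical variety of $\init_w(f_i)$ over the trivially valued residue field, so that a boundary zero of the initial system in a direction $v$ would produce a common ray of the $\Trop(f_i)$ emanating from $w$, contradicting isolation. Note also that these two gaps are really the \emph{same} gap: the isolation hypothesis is used simultaneously to guarantee flatness (no escape to infinity) and BKK non-degeneracy, and both points are where Rabinoff's formal-model machinery does the load-bearing work. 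As a plan the proposal is sound and matches the standard approach; as a proof it remains incomplete precisely at the step you correctly flag as hardest.
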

It can be shown that $\vrt_P(f_i)$ is finite, so \autoref{countmult}
implies that the $\gamma_i$, and hence the intersection multiplicity information depend only on a finite number of terms of $f_i$.
Thus, we can approximate each series by a polynomial.
The following definition and theorem allow us to bound the
number of common zeros of a set of (Laurent) polynomials.

\begin{defn}[Newton polygon]
	Given a polynomial $f = \sum_{u \in S} a_ux^u$,
	$S \subseteq \ZZ^d$ finite, define
	\[ \New(f) = \conv(\{u : a_u \neq 0\}) \subseteq \mathbb{R}^n. \]
\end{defn}

\begin{theorem}[\protect{\cite{bernstein}}]
    \label{bernstein}
    Let $f_1,\dots,f_d \in K[x_1^{\pm1},\dots,x_d^{\pm1}]$ be
    Laurent polynomials with finitely many
    common zeros. Then the number of common zeros with
    multiplicity of $f_i$ in $(K^{\times})^{d}$ is at most
    \[ \MV(\New(f_1),\dots,\New(f_d)). \]
\end{theorem}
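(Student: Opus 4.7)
The plan is to combine the local intersection count from \autoref{countmult} with a mixed-cell decomposition of the mixed volume. Since the common zero set of the $f_i$ in $(K^\times)^d$ is finite, the intersection of the tropicalizations $T \coloneqq \bigcap_i \Trop(f_i) \subseteq \RR^d$ is itself a finite set. Choose a bounded integral $\Gamma$-affine polyhedron $P \subseteq \RR^d$ whose interior contains $T$ as well as the coordinate-wise valuation of every common zero in $(K^\times)^d$; since $P$ is bounded, every Laurent polynomial automatically lies in $K\langle \U(P)\rangle$, so we are in Rabinoff's setting.

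Applying \autoref{countmult} to each $w \in T$ gives
\[
    i(w, V(f_1), \ldots, V(f_d)) = \MV(\gamma_w(f_1), \ldots, \gamma_w(f_d)).
\]
Summing over $w \in T$ bounds the total number of common zeros with multiplicity in $(K^\times)^d$, so it suffices to prove
\[
    \sum_{w \in T} \MV(\gamma_w(f_1), \ldots, \gamma_w(f_d)) \leq \MV(\New(f_1), \ldots, \New(f_d)).
\]

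To see this, interpret the valuations of the coefficients of $f_i$ as heights lifting $\New(f_i)$ into $\RR^{d+1}$; the lower faces project to a regular subdivision of $\New(f_i)$, whose maximal cells are dual to the vertices of $\Trop(f_i)$. Taking the Minkowski sum produces a regular mixed subdivision of $\New(f_1) + \cdots + \New(f_d)$, and the mixed cells — those of the form $\gamma_w(f_1) + \cdots + \gamma_w(f_d)$ for $w \in T$ — exhaust the mixed-volume contribution, yielding the claimed identity.

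I expect the main obstacle to be this last combinatorial step: identifying mixed cells of the regular mixed subdivision with tropical vertices and adding their mixed volumes to recover $\MV(\New(f_1), \ldots, \New(f_d))$. This is the standard Huber--Sturmfels mixed subdivision theorem and is where all the polyhedral bookkeeping lives. An alternative route avoids the issue entirely by a continuity argument: deform the $f_i$ to generic Laurent polynomials with the same Newton polytopes, where the count is an equality via toric intersection theory on a complete toric variety whose fan refines the normal fans of the $\New(f_i)$, and then invoke upper semicontinuity of the zero count under a flat deformation to transfer the bound back to the original $f_i$.
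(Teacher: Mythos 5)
The paper does not prove this statement; it is cited as Bernstein's theorem, so there is no in-paper argument to compare against. Your proposal is an attempt to reprove it using the paper's tropical machinery, and it contains a genuine gap in its very first step.

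You claim that because the $f_i$ have finitely many common zeros in $(K^\times)^d$, the tropical intersection $T = \bigcap_i \Trop(f_i)$ is finite. This is false. Take $d = 2$ and $f_1 = x + y - 1$, $f_2 = x + y - 2$ with coefficients of valuation zero: these have no common zeros at all, yet $\Trop(f_1) = \Trop(f_2)$ is the entire (unbounded, one-dimensional) tropical line, so $T$ is infinite. The issue is that $\Trop(f_i)$ depends only on the support and the valuations of the coefficients, so distinct polynomials with the same combinatorial data have identical tropicalizations while their algebraic common zero locus can collapse. Since $T$ need not be finite, you cannot invoke \autoref{countmult} at every $w \in T$: Rabinoff's Theorem~11.7 explicitly requires $w$ to be an \emph{isolated} point of $\bigcap_i \Trop(f_i)$ lying in the interior of $P$, and nothing in your hypotheses guarantees this.

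The repair you would need is precisely the kind of deformation already present in the paper (\autoref{deformation} and your own "alternative route"): perturb the coefficients of the $f_i$ so that the $\Trop(f_i)$ meet transversally (i.e., the stable tropical intersection) while preserving each Newton polytope, then apply the local count and the Huber--Sturmfels mixed-cell decomposition to the perturbed system, and finally use semicontinuity of the number of isolated solutions of a flat family to transfer the bound back. Your second route is essentially sound in outline, but as written the first route begins from a false premise and the two cannot be glued together without first establishing the transversality that \autoref{countmult} demands. There is also a subsidiary issue in the mixed-subdivision step: the regular subdivision induced by the actual coefficient valuations may not be a \emph{tight} (generic) one, so the identification of mixed cells with tropical intersection points requires a further genericity perturbation of the heights, which again points you back to the deformation argument.
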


The following theorem generalizes \protect{\cite[Theorem~5.3.13]{park}} 
to Laurent series.

\begin{theorem}
    \label{approximate}
	Let $f_1, \dots, f_d \in K\langle\U(P)\rangle$ have finitely many common zeros. Further assume that $\bigcap_{i=1}^d \Trop(f_i)$ consists only of isolated points.
	Let $S_1,\dots,S_d \subseteq \mathbb{Z}^d$ be finite sets
	such that $S_i$ contains $\pi(\vrt_P(f_i))$ for each $i$.
    Define the \textbf{auxiliary polynomials}
	\[
		g_i = \sum_{u\in S_i} a_u x^u.
	\]
	Then if $g_1,\dots,g_d$ have finitely many
	common zeros,
	\begin{align*}
		\#\left(v^{-1}(P^{\circ}) \cap \bigcap_{i=1}^d V(f_i)\right)
		&\le \MV(\New(g_1),\dots,\New(g_d)).
	\end{align*}
    Here, $P^\circ$ is the interior of $P$ and
    $v^{-1}(P^\circ)$ denotes the set of points in
    $(K^\times)^d$ with valuation in $P^\circ$.
\end{theorem}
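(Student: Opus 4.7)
The plan is to reduce the count of common zeros of the Laurent series $f_i$ on $v^{-1}(P^\circ)$ to the count of common zeros of the auxiliary Laurent polynomials $g_i$ in the torus $(K^\times)^d$, which is then controlled by Bernstein's theorem (Theorem \ref{bernstein}). The bridge between the two is Rabinoff's local mixed-volume formula (Theorem \ref{countmult}): the combinatorial data $\gamma_w(f_i)$ at each common valuation $w$ is preserved under truncating $f_i$ to its exponents in $S_i$.

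First, by the fundamental theorem of tropical geometry, every common zero $q \in v^{-1}(P^\circ) \cap \bigcap_i V(f_i)$ has valuation $v(q) \in \bigcap_i \Trop(f_i) \cap P^\circ$, which by hypothesis is a finite set of isolated points in the interior of $P$. Stratifying the count by valuation and invoking Theorem \ref{countmult} at each such $w$ yields
\[
    \#\Big(v^{-1}(P^\circ) \cap \bigcap_{i=1}^d V(f_i)\Big)
    \;\le\; \sum_{w} \MV\bigl(\gamma_w(f_1), \dots, \gamma_w(f_d)\bigr).
\]

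The next step is the key identity $\gamma_w(f_i) = \gamma_w(g_i)$ at each such $w$. Since $\pi(\vrt_w(f_i)) \subseteq \pi(\vrt_P(f_i)) \subseteq S_i$, every index achieving the minimum $m_w(f_i)$ already appears in $g_i$, while by the very definition of $\vrt_P(f_i)$ any index $u \in S_i$ outside $\pi(\vrt_P(f_i))$ has height strictly greater than $m_w(f_i)$ at every $w \in P$. Hence $m_w(g_i) = m_w(f_i)$ and $\vrt_w(g_i) = \vrt_w(f_i)$, so taking convex hulls and projecting yields $\gamma_w(g_i) = \gamma_w(f_i)$; in particular $\Trop(g_i)$ and $\Trop(f_i)$ agree on $P^\circ$, so each $w$ remains isolated in $\bigcap_i \Trop(g_i) \cap P^\circ$.

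Substituting, the upper bound becomes $\sum_w \MV(\gamma_w(g_1), \dots, \gamma_w(g_d))$. By Theorem \ref{countmult} applied now to the $g_i$'s (restricted if necessary to a small bounded sub-polyhedron of $P^\circ$ around each $w$ where the $g_i$ lie in the appropriate Tate algebra), each summand is the number of common zeros of the $g_i$'s with coordinatewise valuation exactly $w$, counted with multiplicity. Summing these disjoint contributions bounds the count by the total number of common zeros of $g_1,\dots, g_d$ in $(K^\times)^d$, which by Bernstein's theorem is at most $\MV(\New(g_1),\dots,\New(g_d))$. The principal technical obstacle is the vertex-matching identity $\vrt_w(f_i) = \vrt_w(g_i)$, which rests on the fact that extra indices in $S_i$ beyond $\pi(\vrt_P(f_i))$ cannot contribute new lower vertices at any $w \in P^\circ$ --- a direct consequence of the definition of $\vrt_P$ --- together with the bookkeeping needed to invoke Theorem \ref{countmult} on the polynomial side even though $\bigcap_i \Trop(g_i)$ may globally be larger than $\bigcap_i \Trop(f_i)$.
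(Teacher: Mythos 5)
Your proposal is correct and takes essentially the same route as the paper: establish $\gamma_w(f_i) = \gamma_w(g_i)$ for $w \in P^\circ$, invoke Theorem \ref{countmult} to transfer the zero count from the $f_i$ to the $g_i$ on $v^{-1}(P^\circ)$, then bound by the total zero count of the $g_i$ in $(K^\times)^d$ via Bernstein. You are more explicit than the paper in two places the paper leaves implicit---justifying the vertex-matching identity $\vrt_w(g_i) = \vrt_w(f_i)$ from the definition of $\vrt_P$, and noting that each $w$ remains isolated in $\bigcap_i \Trop(g_i) \cap P^\circ$ so that Theorem \ref{countmult} still applies to the auxiliary polynomials---but these are elaborations, not a different argument.
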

\begin{proof}
    By construction $\gamma_w(f_i) = \gamma_w(g_i)$ for all 
    $i$ and $w \in P^\circ$.
    By \autoref{countmult}, the number of common zeros
    of the $f_i$ with valuation $w \in P^\circ$ is determined only by
    $\gamma_w(f_i)$. Thus, the number of common zeros of the $f_i$
    with valuation in $P^\circ$
    is equal to the number of common zeros of the $g_i$, with
    valuation in $P^\circ$. This is obviously less than or equal to the number of common zeros with any valuation.
    But by \autoref{bernstein} the \emph{total} number of common
    zeros of the $g_i$ in $(K^\times)^d$ is 
    \[ 
        \MV(\New(g_1),\dots,\New(g_d)). 
    \]
    This proves the theorem. Note that $\MV(\New(g_1),\dots,\New(g_d))$ may give us zeros with valuations outside of $P$, and thus our theorem only states an inequality.
\end{proof}	

\subsection{Deformation of Laurent series}
In order to count points outside the algebraic special set,
we need to count zero-dimensional components
of the vanishing locus of the $\eta_i$, even if there are
(infinite) positive-dimensional components.
In this subsection, we show that we can deform a set of
Laurent series to have finite intersection, without changing
their tropicalizations and $\gamma_i$.

\begin{defn}
    Let $P \subseteq \mathbb{R}^d$ be an integral $\Gamma$-affine polyhedron. For $f_1,\dots,f_d
    \in K\langle\U(P)\rangle$, define
    $N_{0}(f_1,\dots,f_d)$ to be the number of zero-dimensional
    components of $\bigcap_{i=1}^{d}V(f_i)$, counted with multiplicity.
    If $Y = V(f_1,\dots,f_d)$ is finite, then define
    \[ N(f_1,\dots,f_d) = \dim H^{0}(Y, \mathcal{O}_{Y}). \] 
\end{defn}


Note that for the rest of this paper, we say
that a series is non-degenerate if for every variable $t$, some power of $t$ appears with nonzero coefficient.

\begin{theorem}[\protect{\cite[Theorem~6.1.7]{park}}]
    \label{deformation}
	Let $f_1, \dots, f_d \in K\langle U(P)\rangle$ be non-degenerate
	Laurent series. There exist $g_1, \dots, g_d$ with
	$\Trop(f_i) = \Trop(g_i)$ and $\gamma_w(f_i) = \gamma_w(g_i)$
	for all $w \in P$ such that the $g_i$ have finitely many
	common zeros, and
	\[
		N_0(f_1, \dots, f_d) \le N(g_1, \dots, g_d).
	\]
	Furthermore, if the $f_i$ are Laurent polynomials,
	then the $g_i$ can be chosen to be Laurent polynomials
	as well, with $\New(g_i) = \New(f_i)$.
\end{theorem}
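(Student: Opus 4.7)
The plan is a generic perturbation argument combined with conservation of multiplicity. Since $\vrt_P(f_i)$ is finite and the combinatorial data $\Trop(f_i)$ and $\gamma_w(f_i)$ depend only on the lower hull of the height graph of $f_i$, we are free to modify the other coefficients of $f_i$ while preserving all the tropical invariants on $P$. The goal is to use this freedom to force the common zero set to become finite, and then to control the effect on intersection multiplicities via a flat deformation.

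First, I would carve out the space of admissible perturbations. Call $h_i = \sum_u b_u x^u \in K\langle\U(P)\rangle$ an \emph{admissible perturbation} of $f_i = \sum_u a_u x^u$ if, at every lower-hull vertex $(u,v(a_u)) \in \vrt_P(f_i)$, we have $v(b_u) > v(a_u)$, and at every other monomial the point $(u, v(a_u + b_u))$ lies strictly above the lower hull (with all asymptotic convergence conditions on $P$ preserved). By the ultrametric triangle inequality, $g_i \coloneqq f_i + h_i$ then satisfies $\vrt_P(g_i) = \vrt_P(f_i)$, hence $\Trop(g_i) = \Trop(f_i)$ and $\gamma_w(g_i) = \gamma_w(f_i)$ for every $w \in P$. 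In the polynomial case, this construction also preserves $\New(f_i)$ provided we do not zero out any vertex coefficient.

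Second, I would show that generic admissible perturbations produce a $g_1,\dots,g_d$ with only finitely many common zeros. Truncate each $f_i$ outside a sufficiently large finite support, reducing to the Laurent polynomial case; the tropical data and $\gamma_w$ are unaffected by coefficients of very high valuation. In the finite-dimensional parameter space of Laurent polynomials with fixed supports, the locus of tuples with positive-dimensional common zero set is a proper Zariski-closed subset, because by the Bernstein--Kushnirenko theorem \autoref{bernstein} the generic tuple has exactly mixed-volume-many isolated common zeros. The admissible perturbations form a $p$-adically open subset of this parameter space, so we may choose one landing outside the bad locus.

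Finally, for the inequality $N_0(f_1,\dots,f_d) \le N(g_1,\dots,g_d)$, I would interpolate via the one-parameter family $F_i(s) \coloneqq f_i + s\, h_i$ over a small $K$-affinoid disk in the parameter $s$. At $s = 0$ we recover $(f_1,\dots,f_d)$, and for an appropriate nonzero $s_0$ of small enough valuation we obtain $(g_1,\dots,g_d)$. Near each zero-dimensional component $Z$ of $V(f_1,\dots,f_d)$ of multiplicity $m$, the family is flat over the parameter disk, and conservation of number (upper semicontinuity of the length of the fiber algebra) forces the total length of the perturbed fiber in a small analytic neighborhood of $Z$ to be at least $m$. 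Summing over all isolated components gives the inequality, since any positive-dimensional components of $V(f_1,\dots,f_d)$ can only break up into additional isolated zeros of the perturbed system, contributing nonnegatively. The main obstacle is this semicontinuity step, which must be justified in the rigid-analytic setting; concretely, one verifies flatness of $F_i(s)$ near each isolated zero by Weierstrass preparation and then invokes the affinoid analogue of conservation of number.
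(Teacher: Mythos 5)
Your proposal is correct in spirit but takes a genuinely different route from the paper, which follows Park's proof. The paper deforms the $f_i$ \emph{one at a time}, inductively maintaining the invariant that $\codim \bigcap_{i \le r} V(g_i) \ge r$: at each step it picks a point $P_j$ on every excess component and invokes \cite[Lemma~5.6]{park} to produce a perturbing polynomial $h$ that misses all the $P_j$, so that $g_{r+1} = f_{r+1} + \epsilon h$ cuts the dimension down. It then excises the remaining positive-dimensional components by restricting to $\{|f| \ge A\}$ for $f$ in the product $I$ of non-maximal minimal primes before applying \cite[Theorem~10.2]{rabinoff}. You instead perturb all $d$ series at once and argue by genericity: the admissible perturbations form a $p$-adically open set, hence are not contained in the proper Zariski-closed locus of tuples with positive-dimensional intersection. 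This is cleaner conceptually, but note that it requires the generic tuple with given supports to have finitely many common zeros, which is strictly more than the version of Bernstein stated as \autoref{bernstein} (that version assumes finiteness); the paper's Bertini-style inductive step only needs to avoid finitely many explicitly chosen points and dodges this. Both proofs then bottom out at the same technical input --- a rigid-analytic conservation-of-number statement --- which is exactly what the paper gets from Rabinoff's Theorem~10.2 and what you defer as the "main obstacle." One small slip in your step three: you cite "upper semicontinuity of the length of the fiber algebra" to obtain the lower bound $\ge m$, but semicontinuity goes the wrong way; the actual input is \emph{constancy} of length in a flat finite family (established, e.g., by Weierstrass preparation and miracle flatness for a local complete intersection over a one-dimensional regular base), which is what conservation of number asserts.
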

\begin{proof}
    The proof of \protect{\cite[Theorem~6.1.7]{park}} 
    does not use the condition that the expansion is a power series and not a Laurent series. The conditions $\Trop(f_i) = \Trop(g_i)$ and $\gamma_w(f_i) = \gamma_w(g_i)$ immediately imply that $\New(g_i) = \New(f_i)$.
    Thus, the proof immediately generalizes to the Laurent series case, but we will explain the main idea.
    
    We deform the $f_i$ one at a time, by inductively finding
    $g_1,\dots,g_r$ satisfying
    \begin{enumerate}
      \item $\Trop(f_i) = \Trop(g_i)$ and $\gamma_w(f_i) = \gamma_w(g_i)$ for $i \in \{1,\dots,r\}$
      \item $\codim \bigcap_{i=1}^{r}V(g_i) \geq r$
      \item $N_{0}(f_1,\dots,f_d) \leq N_{0}(g_1,\dots,g_r,f_{r+1},\dots,f_d)$
    \end{enumerate}
    for $1 \le r \le d$.
    The statement is clear for $r = 1$, by taking $g_1 = f_1$.
    Given $g_1,\dots,g_r$, let $C_1,\dots,C_\ell$ be
    the codimension $r$ irreducible components of $\bigcap_{i=1}^r V(g_i)$, and let $P_i \in C_i$ for
    $i = 1,\dots,\ell$, such that $P_i \neq 0$ in $K^d$. 
    By \protect{\cite[Lemma~5.6]{park}} there exists
    a polynomial $h$ such that $h$ does not vanish at
    any of the $P_i$, and $M(h) \subseteq M(f_{i+1})$
   (see \protect{\cite[Definition~5.4]{park}}), so that
   $g_{r+1}\coloneqq f_{r+1} + \epsilon h$ has the same tropicalization
   and $\gamma_w$ as $f_{r+1}$ for small enough $\epsilon$.

    We will also elaborate slightly on why these 
    deformations do not decrease the number of 
    zero-dimensional components.
    Let $G_i(t_1,\dots,t_d,\epsilon) = g_i(t_1,\dots,t_d)$ 
    for $1\le i \le r$, let 
    $G_{r+1} = f_{r+1}(t_1,\dots,t_d) + \epsilon h(t_1,\dots,t_d)$,
    and let $F_{i} = f_i(t_1,\dots,t_d)$ for $i > r + 1$.
   	Define $I$ to be the product of all non-maximal minimal prime ideals containing the ideal
    $(g_1,\dots,g_r,f_{r+1},\dots,f_d)$, and let $f$ be an element of $I$ which does not vanish on any of the zero-dimensional components, which exists by the prime avoidance theorem.
    Consider the projection onto the second factor
    \[
    	\alpha \colon \Sp K\langle\U(P)\rangle|_{|f| \ge A} \times B_K^1 \to B_K^1,	
    \]
    where $A = \min_{1 \le i \le \ell} |f(P_i)|$, and
    $\Sp K\langle \U(P)\rangle|_{|f| > A}$ denotes
    restriction to the set $\{|f| > A\}$.
    Let $Y = V(G_1,\dots,G_r,G_{r+1},F_{r+2},\dots,F_d)$.
    Then $Y \cap \alpha^{-1}(0)$ consists of all points $\mathfrak p$ which do not
    vanish at $f$ but vanish at $g_1,\dots,g_r$ and $f_{r+1},\dots,f_d$. 
    This implies that $\mathfrak p$ is not on a positive dimensional component, 
    because if it were then $\mathfrak p$ vanishes at $I \ni f$.
    
    Thus the size of $Y \cap \alpha^{-1}(0)$ is equal to the number
    of zero-dimensional components of 
    $V(g_1,\dots,g_r,f_{r+1},\dots,f_d)$. Then \protect{\cite[Theorem~10.2]{rabinoff}}, implies
    that for small $|\epsilon|$, this is also equal to the number of 	common zeros of $g_1,\dots,g_r$, $g_{r+1} = f_{r+1} + \epsilon h$, $f_{r+1},\dots,f_d$, away from the positive-dimensional components
    of $V(g_1,\dots,g_r,f_{r+1},\dots,f_d)$.
\end{proof}

\begin{remark}
    In \autoref{approximate}, if the auxiliary
    polynomials $g_i$ have infinitely many common
    zeros, we can choose Laurent polynomials
    $g_1', \dots, g_d'$ with finitely many common
    zeros by \autoref{deformation}. 
\end{remark}

\begin{remark}
    In order to bound $N(g_1,\dots,g_d)$, we have to
    also count ``degenerate'' zeros, where some of the
    coordinates are zero (cf.~\protect{\cite[Section~6.2]{park}}).
    Note that coordinates can only be zero on disks, not on annuli, so we only have to deal 
    with this case when one or more residue tubes are disks.
\end{remark}

\subsection{Relevant results concerning Laurent series expansions}

\begin{remark}
    Since we want to bound the number of common zeros of the $\eta_i$
    with valuations in $[1/e, a/e) \times [1/e, b/e)$ 
    where $a$ or $b$ could be infinite,
    it suffices to bound the number of common zeros of the $\eta_i$
    with valuations in $P = [1/e, M] \times [1/e, N]$ as $N \to a/e$, $M \to b/e$,
    and $M, N < \infty$.
    Note that then $\eta_i \in K\langle \U(P)\rangle$, and the 
    theorems in this section apply. In particular, the local expansions coming from symmetric power Chabauty lie in some affinoid algebra $K\langle \U(P)\rangle$.
\end{remark}

For the rest of this section, $K$ is a finite extension
of $\Qp$.
The following is a slight generalization of a definition of \cite{stoll2006}.
\begin{defn}
    [\protect{\cite[Section~6]{stoll2006}}]
    
    For $r > 0$, define
    \[
        \delta(r,k) = \max\{N \ge 0 \mid rv(k+1) + N \le rv(k + N + 1)\}.
    \]
\end{defn}

The proof of \cite[Lemma~6.1]{stoll2006} does not
depend on the fact that $e$ is the ramification
index; it can be any positive real number. 
Thus we have the following generalization.

\begin{lemma}[\protect{\cite[Lemma~ 6.1]{stoll2006}}]
    \label{deltabound}
    If $p > r + 1$, then $\delta(r, k) \le r\lfloor k/(p-r-1)\rfloor$.
    In particular
    \[
        k + \delta(r, k) \le \mu_r k
    \]
    where
    \[ \mu_r = \frac{p-1}{p-r-1}. \]
\end{lemma}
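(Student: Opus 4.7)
The plan is to adapt Stoll's argument essentially verbatim, exploiting the fact that the only analytic inputs---the crude bound $v(m) \le \log_p m$ for positive integers $m$ and a Bernoulli-type inequality $p^b \ge 1 + b(p-1)$---are insensitive to whether $r$ is an integer or a positive real.

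First I would set $N := \delta(r,k)$ and introduce the integer $b := v(k+N+1) - v(k+1)$, so that the defining inequality of $\delta$ collapses to the clean form $N \le rb$. If $b \le 0$, then already $N = 0$ and the desired bound holds trivially, so one may assume $b \ge 1$. Combining $v(k+1) \ge 0$ (which holds because $k \ge 0$) with $v(k+N+1) \le \log_p(k+N+1)$ gives
\[
    b \le v(k+N+1) \le \log_p(k + N + 1) \le \log_p(k + rb + 1),
\]
that is, $p^b \le k + rb + 1$.

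Next, the elementary inequality $p^b \ge 1 + b(p-1)$, valid for all integers $b \ge 0$ (by induction, or by the telescoping identity $p^b - 1 = (p-1)\sum_{i=0}^{b-1} p^i \ge b(p-1)$), rearranges the previous display into $k \ge b(p-r-1)$. Since the hypothesis $p > r + 1$ makes $p - r - 1 > 0$, this yields $b \le \lfloor k/(p-r-1)\rfloor$, and hence $N \le rb \le r\lfloor k/(p-r-1)\rfloor$, proving the main bound. For the ``in particular'' clause, one simply computes
\[
    k + \delta(r,k) \;\le\; k + \frac{rk}{p-r-1} \;=\; \frac{k(p-1)}{p-r-1} \;=\; \mu_r k.
\]

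There is no substantive obstacle here: the argument is purely arithmetic and uses $r$ only as a scalar multiplier, so no integrality of $r$ is ever invoked. The only thing to check, as the paper already observes, is that Stoll's original chain of inequalities never exploits $r$ (equivalently $r/e$ in his normalization) being rational or integral; once this is confirmed, the proof transfers without modification to arbitrary positive real $r$.
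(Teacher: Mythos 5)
Your proof is correct and takes essentially the same route the paper does: the paper proves this lemma simply by citing Stoll's Lemma 6.1 and remarking that his argument never uses integrality of the parameter ($e$ in Stoll's notation, $r$ here), and you have faithfully reproduced that argument and confirmed the observation, including the reduction to $N \le rb$, the bound $p^b \le k + rb + 1$, the inequality $p^b \ge 1 + b(p-1)$, and the passage to the floor via integrality of $b$.
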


\begin{lemma}[\protect{\cite[Lemma~6.2.4]{park}}]
    \label{padicrolle}
    Let $K/\mathbb{Q}_p$ be an extension with ramification index $e$.
    Let $f \in K[\![ t ]\!]$ be such that $f' \in \O_K[\![ t ]\!]$
    and such that $f$ has no constant term. 
    Let $k - 1 = \ord_{t=0} (f' \bmod{\pi_K})$, where
    $\pi_K$ is a prime of $\O_K$. Let $r < 1/e$.
    Then if $w \in (r, \infty)$ and $u > k + \delta(1/r, k-1)$ or $u=0$,
    then $(u, v(a_u)) \notin \vrt_w(f)$.
\end{lemma}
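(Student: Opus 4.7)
The plan is to show that for $u > k + \delta(1/r, k-1)$ and any $w \in (r, \infty)$, the point $(k, v(a_k))$ strictly beats $(u, v(a_u))$ under the functional $(w,1)\cdot(\cdot,\cdot)$, which by the definition of $\vrt_w$ forces $(u, v(a_u)) \notin \vrt_w(f)$. Writing $f = \sum_{u \ge 1} a_u t^u$ (no constant term, by hypothesis), the assumption $f' \in \mathcal{O}_K[\![t]\!]$ gives $v(u a_u) \ge 0$, so $v(a_u) \ge -v(u)$ for every $u \ge 1$. The assumption $k - 1 = \ord_{t=0}(f' \bmod \pi_K)$ pins down $v(k a_k) = 0$, that is $v(a_k) = -v(k)$. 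The case $u = 0$ is immediate, since $a_0 = 0$ means $(0, v(a_0))$ is simply absent from the height graph $H(f)$.

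The desired comparison $wu + v(a_u) > wk + v(a_k) = wk - v(k)$ reduces, after applying $v(a_u) \ge -v(u)$, to showing $w(u-k) > v(u) - v(k)$. The key input is the defining property of $\delta$: namely, $\delta(1/r, k-1)$ is the largest $N \ge 0$ satisfying $(1/r) v(k) + N \le (1/r) v(k+N)$. Setting $N \coloneqq u - k > \delta(1/r, k-1)$, maximality yields the strict reverse inequality $(1/r) v(k) + (u-k) > (1/r) v(u)$, equivalently $v(u) - v(k) < r(u-k)$. Combining with $v(a_u) \ge -v(u)$ yields $v(a_u) + v(k) > -r(u-k)$, so
\begin{align*}
wu + v(a_u) - \bigl(wk + v(a_k)\bigr)
&= w(u-k) + v(a_u) + v(k) \\
&> w(u-k) - r(u-k) \\
&= (w-r)(u-k) \\
&\ge 0
\end{align*}
for every $w > r$, as required.

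The main obstacle is not conceptual but bookkeeping: one has to verify that the strict inequality coming from the maximality of $\delta$ compounds correctly with the non-strict integrality bound $v(a_u) \ge -v(u)$, so that the final comparison stays strict uniformly for all $w \in (r, \infty)$ rather than only at an isolated $w$. Once this is arranged, the lemma reduces to an unwinding of the definitions of $\delta$, $H(f)$, and $\vrt_w(f)$.
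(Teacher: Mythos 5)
Your proof is correct and follows essentially the same line as the paper's: unwind the definition of $\delta$ to get the strict inequality $\tfrac1r v(k) + (u-k) > \tfrac1r v(u)$, multiply by $r$ and use $r < w$, then feed in $v(ua_u) \ge 0$ and $v(ka_k) = 0$ to conclude $(k, v(a_k))$ strictly dominates $(u, v(a_u))$ under $(w,1)\cdot(\cdot,\cdot)$, with the $u=0$ case dispatched by $a_0 = 0$. The paper states exactly these steps more tersely; your write-up simply spells out the intermediate bookkeeping.
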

\begin{proof}
    By definition $u > k + \delta(1/r, k-1)$ means 
    \[
        \frac{1}{r}v(k) + u-k > \frac{1}{r}v(u),
    \]
    so 
    \[
        v(u) - v(k) < r (u-k) < w (u-k),
    \]
    or
    \[
        v(a_u) + uw > v(a_k) + kw,
    \]
    since $v(ua_u) \ge 0$, and $v(ka_k) = 0$.
    This shows that $(u,v(a_u)) \notin \vrt_w(f)$. 
    Since $a_0 = 0$, indubitably $(0, v(a_0)) \notin \vrt_w(f)$.
\end{proof}
\begin{remark}
\label{rmk:delta}
Note that
\[
	\lim_{1/r \to e} k + \delta(1/r, k-1) \le \lim_{1/r \to e} \mu_rk = \mu_e k.
\]
\end{remark}

Thus any power series can be approximated by a polynomial whose degree is less than $k + \delta(e, k-1)$. For Laurent series, we have the following analogous results from \cite{katz2015uniform}.

\begin{defn}[\protect{\cite[Definition~4.3]{katz2015uniform}}]
	For $r > 0$, $N_0 \in \ZZ$, and $p$ a prime,
    define $N_p(r, N_0)$ to be the smallest positive integer
    $N$ such that for all $n \ge N$, we have
    \[
    	r(n - N_0) > \lfloor \log_p(n) \rfloor.
    \]
\end{defn}

\begin{remark}
\label{rmk:Npr}
	By \cite[Remark~4.5]{katz2015uniform}, we have
    \[
    	\lim_{s \nearrow r} N_p(s, N_0) = N_p(r, N_0).
    \]
\end{remark}

\begin{lemma}
	If $r = 1/2$, $p \ge 5$ and $N_0 \ge 1$, then
    $N_p(r, N_0) \le 2N_0$.
\end{lemma}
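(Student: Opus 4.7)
The plan is to unpack the definition of $N_p(r, N_0)$ and directly verify that, when $r = 1/2$, $p \ge 5$, and $N_0 \ge 1$, the inequality $r(n - N_0) > \lfloor \log_p(n) \rfloor$ holds for every integer $n \ge 2N_0$; by the minimality clause in the definition this will give $N_p(1/2, N_0) \le 2N_0$.

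The first step is to separate the role of $N_0$ from the role of $p$ by a crude estimate. Fix $n \ge 2N_0$. Since $n \ge 2N_0$ is equivalent to $N_0 \le n/2$, we obtain $n - N_0 \ge n/2$, hence
\[
    \tfrac{1}{2}(n - N_0) \;\ge\; \tfrac{n}{4}.
\]
It therefore suffices to prove the cleaner inequality $n/4 > \lfloor \log_p(n) \rfloor$ for all integers $n \ge 2N_0 \ge 2$ and all primes $p \ge 5$, because the chain $\tfrac{1}{2}(n - N_0) \ge n/4 > \lfloor \log_p(n) \rfloor$ is genuinely strict at the final step.

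The second step is to prove the reduced inequality $n > 4 \lfloor \log_p(n) \rfloor$ for $p \ge 5$ and $n \ge 1$. Setting $k = \lfloor \log_p(n) \rfloor$, we have $n \ge p^k \ge 5^k$, so it is enough to show that $5^k > 4k$ for every integer $k \ge 0$. For $k = 0, 1, 2$ this is immediate ($1 > 0$, $5 > 4$, $25 > 8$), and for $k \ge 2$ one passes from $k$ to $k+1$ by multiplying by $5$ on the left and adding $4$ on the right: $5^{k+1} = 5 \cdot 5^k > 5 \cdot 4k \ge 4k + 4 = 4(k+1)$, using $4 \cdot 4k \ge 4$ whenever $k \ge 1$. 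This completes the induction.

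Combining the two steps gives the lemma. There is essentially no obstacle: the statement is a quantitative bookkeeping estimate on $\lfloor \log_p \rfloor$, and the hypotheses $p \ge 5$ and $N_0 \ge 1$ are exactly what is needed to make the two elementary inequalities $n - N_0 \ge n/2$ and $5^k > 4k$ cooperate. I would simply present the two-line calculation above, flagging the fact that strictness at $n = 2N_0$ is preserved because $n/4 > \lfloor \log_p(n) \rfloor$ is itself strict.
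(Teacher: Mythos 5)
Your proof is correct. You unpack the definition of $N_p(r,N_0)$ directly, reduce via $n \ge 2N_0 \Rightarrow \tfrac12(n-N_0) \ge n/4$, and then verify $n > 4\lfloor\log_p n\rfloor$ by the standard induction $5^k > 4k$; all the estimates check out, and you are right that strictness at the final step is what makes the chain work. The paper, by contrast, does not give an argument at all: its ``proof'' consists of the single sentence that the bound ``can be easily checked from the inequalities preceding inequalities (4.6)'' in Katz--Rabinoff--Zureick-Brown. So your write-up is not a different route so much as a self-contained elementary verification of what the paper delegates to a citation. That is a genuine improvement in readability --- the reader no longer has to chase down the external reference to see why the hypotheses $p \ge 5$ and $N_0 \ge 1$ are exactly what is needed --- though one could slightly streamline by noting that the induction step already works from $k \ge 1$, so the base case $k=2$ is redundant.
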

\begin{proof}
	This can be easily checked from the inequalities preceding inequalities (4.6) in \cite{katz2015uniform}.
\end{proof}

\begin{theorem}[\protect{\cite[Corollary~4.18]{katz2015uniform}}]
\label{krzb418}
	Let 
    \[
    	A_{\ol{P}} = \{z \in K \mid 0 < v(z) < b/e \}
    \]
    be an annulus coming from a node, where
    $e$ is the ramification index of the field of definition $K$ of $A_{\ol p}$. Let
	$\omega$ be a good differential $1$-form contained in
    $V$, where $V$ is as in \autoref{codimensionone}. Then for $r < 1/e$ the
	number of zeros of $\eta_\omega$ on 
    \[
    	(A_{\ol P})_r \coloneqq \{ z \in K \mid r < v(z) < b/e - r \}
    \] 
    is at most $2N_p(r, 2g-2)$.
\end{theorem}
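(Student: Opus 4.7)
My plan is to reduce the Laurent-series zero count for $\eta_\omega$ on $(A_{\ol P})_r$ to two \emph{power-series} zero counts, one for each half of the Laurent expansion, and then apply the $p$-adic Newton-polygon machinery assembled in \autoref{padicrolle}, \autoref{deltabound}, and \autoref{rmk:Npr}. Since $\omega$ is good, its expansion on $A_{\ol P}$ has no $dt/t$ term, so Berkovich--Coleman integration gives
\[
    \eta_\omega(t) = \sum_{n\neq 0} \frac{a_n}{n}\, t^n;
\]
after rescaling $\omega$ by a power of a uniformizer (which does not change the zero locus) we may assume $a_n \in \O_K$ and $\min_{n\neq 0} v(a_n) = 0$.

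The next step is to split $\eta_\omega = \eta^+ + \eta^-$ into its positive- and negative-degree parts and bound each separately. A zero $z \in (A_{\ol P})_r$ satisfies either $v(z) > r$ or $v(z^{-1}) > r$, and in either case the relevant slope of the Newton polygon of $\eta_\omega$ is realized by vertices from the corresponding half, so the total zero count is bounded by the number of zeros of $\eta^+$ on the disk $\{v(t) > r\}$ plus the number of zeros of $\eta^-$ on the disk $\{v(t^{-1}) > r\}$. The geometric input that makes the number $2g-2$ appear is this: the mod-$\pi_K$ reduction of $\omega$ restricts to a nonzero regular differential on each of the two components of the semistable model meeting at the node $\ol P$; since each such component has genus at most $g$, its canonical divisor has degree at most $2g-2$, which forces
\[
    k^\pm \;\coloneqq\; \min\{\, n > 0 : v(a_{\pm n}) = 0 \,\} \;\le\; 2g-2.
\]
Consequently $\ord_{t=0}(\eta^{+\prime} \bmod \pi_K) = k^+ - 1 \le 2g-3$, and the analogous statement in the variable $s = t^{-1}$ holds for $\eta^-$.

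Finally, \autoref{padicrolle} applied to $\eta^+$ with parameter $r$ guarantees that no Newton-polygon vertex contributes to $\vrt_w$ for $w > r$ once $u > k^+ + \delta(1/r, k^+ - 1)$; combining this with \autoref{deltabound} and the limiting form in \autoref{rmk:Npr}, the total number of zeros of $\eta^+$ with $v(t) > r$ is bounded by $N_p(r, 2g-2)$. By symmetry the same bound applies to $\eta^-$ after substituting $s = t^{-1}$, yielding $2N_p(r, 2g-2)$ zeros overall. The hard part is the geometric step: one needs to verify carefully that the pullback of the Jacobian-level differential $\omega$ to the annulus really does reduce to a nonzero regular differential on each of the two node components, and that the canonical-degree bound transfers cleanly into the bound on the leading index of the Laurent expansion. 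Once that input is in hand, the other three steps are essentially formal manipulations of objects already developed in the excerpt.
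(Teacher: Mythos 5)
The paper itself does not prove this statement: it is quoted verbatim as \cite[Corollary~4.18]{katz2015uniform} and used as a black box. So the comparison is with the KRZB proof, which proceeds via the slope formula and the tropical canonical divisor on the Berkovich skeleton. Your outer scaffolding --- splitting the Laurent series into positive- and negative-index halves, applying the $p$-adic Rolle argument in each half, and getting the factor of $2$ from the two ends of the annulus --- is structurally the same as theirs. The problem is the geometric step you flag as ``the hard part,'' and unfortunately it is not merely hard but, as written, wrong.

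You normalize $\omega$ so that $\min_{n\neq 0} v(a_n)=0$ \emph{on the annulus}, and then argue that the mod-$\pi_K$ reduction of $\omega$ restricts to a nonzero regular differential on each branch component, whose canonical degree $\le 2g-2$ bounds the leading index $k^\pm$. But these two normalizations are not the same. If you instead scale $\omega$ so that it is a primitive element of $H^0(\X,\omega_{\X/\O_K})$, its reduction $\bar\omega$ is a nonzero global section of the dualizing sheaf of $\X_s$ (total degree $2g-2$), but $\bar\omega$ may vanish identically on the very components that meet at $\ol P$; in that case the Laurent coefficients on the annulus all have strictly positive valuation and your local rescaling differs from the global one. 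After the local rescaling, the reduction you see on the annulus is \emph{not} the germ of a global integral section of $\omega_{\X_s}$, so the global degree bound $2g-2$ cannot be applied to the order of vanishing at $\ol P_1, \ol P_2$. There are further local issues even in the good case: the restriction of $\omega_{\X_s}$ to a component $C_i$ has degree $2g_i - 2 + n_i$ (genus plus number of nodal branches on $C_i$), which is not obviously $\le 2g-2$, and the germ at a node is a section of the dualizing sheaf with a log pole, not a regular differential; and if $\ol P$ is a self-node, the ``two components'' picture collapses. This is precisely why KRZB track the valuation of $\omega$ as a piecewise-linear function $F$ on the skeleton $\Gamma$ and control its slope changes by the tropical canonical class $K_\Gamma$ of total degree $2g-2$: the bound on the leading index at the edge corresponding to $A_{\ol P}$ comes from a global sum over $\Gamma$, not from a canonical-degree count on a single component. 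So the claim $k^\pm \le 2g-2$ needs that global tropical input, and the local argument you propose does not supply it.
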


\begin{remark}
	This is equivalent to the statement that there
    there exists an interval $[c_1, c_2]$ of length at most
    $2N_p(r, 2g-2)$ such that $u \notin [c_1, c_2]$ implies 
    $u \notin \vrt_w(\eta_\omega)$ for $w \in (r, b/e - r)$, because \autoref{countmult} counts with multiplicity.
\end{remark}

\begin{lemma}
    \label{pureseries}
    Let $P$ be a polyhedron, and
    let $F = f_1(t_1) + \dots + f_d(t_d)$ be a pure
    Laurent series in $K\langle \U(P)\rangle$.
    Furthermore assume that $f_1(t_1)$ is the only series
    with a constant term (or possibly not). Let
    $w  \in P$. Then
    \[
        \vrt_w(F) \subseteq \vrt_{w}(f_1)
        \cup\cdots\cup \vrt_{w}(f_d).
    \]
\end{lemma}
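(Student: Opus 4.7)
The plan is to reduce the statement to two observations: that purity of $F$ forces a termwise decomposition of its support, and that this decomposition commutes with the minimum defining $m_w(F)$.

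First I would analyze the support of $F$ in $\ZZ^d$. Since $F = f_1(t_1) + \cdots + f_d(t_d)$ is pure, every monomial $t^u$ appearing in $F$ with $u \neq 0$ lies on exactly one coordinate axis $\ZZ \cdot e_i$, so its coefficient in $F$ is precisely the coefficient of $t_i^{u_i}$ in $f_i$---no other summand even carries a monomial with that exponent, so no addition or cancellation can occur. The only exponent that could \emph{a priori} receive contributions from several summands is $u = 0$, but the hypothesis that at most one $f_i$ (namely $f_1$, if any) has a constant term removes this ambiguity. Consequently the height graphs satisfy $H(F) = \bigcup_i H(f_i)$ exactly, with matching $\RR$-coordinates.

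Next I would exploit this identity to compare the minima. Minimizing the linear functional $(u, h) \mapsto (w, 1) \cdot (u, h)$ over a union of sets yields
\[ m_w(F) \;=\; \min_{(u,h) \in H(F)} (w,1) \cdot (u,h) \;=\; \min_i\, m_w(f_i). \]
Given any $(u, h) \in \vrt_w(F)$, the containment above places $(u, h)$ in $H(f_i)$ for some $i$, and then
\[ m_w(f_i) \;\le\; (w, 1) \cdot (u, h) \;=\; m_w(F) \;=\; \min_j m_w(f_j) \;\le\; m_w(f_i), \]
so equality holds throughout, giving $(u, h) \in \vrt_w(f_i)$ as desired.

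I do not anticipate a serious obstacle: the argument is essentially definition-chasing once one notes that purity prevents the supports of the $f_i$ from interacting in $\ZZ^d$. The single mild subtlety, and the reason for the constant-term hypothesis, is that the origin is the unique exponent lying on every axis $\ZZ \cdot e_i$ simultaneously; without that hypothesis the constant term of $F$ could arise as a nontrivial sum in which the individual constant terms cancel partially, yielding a point of $H(F)$ whose height exceeds that of every $(0, v(a^{(i)}_0))$ and therefore need not belong to any $\vrt_w(f_i)$.
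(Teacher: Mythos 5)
Your proof is correct and follows essentially the same route as the paper's, which also reduces the claim to the observation that, by purity, the coefficient of $t^u$ in $F$ for $u \neq 0$ is exactly the corresponding coefficient of $f_i$ (when $u$ lies on the $i$-th axis), and that the constant-term hypothesis removes the only exponent, $u = 0$, where summands could interact. Your write-up is somewhat more explicit than the paper's terse argument: you spell out the identity $H(F) = \bigcup_i H(f_i)$ and the consequent $m_w(F) = \min_i m_w(f_i)$, which is the step the paper glosses over when it asserts directly that $u \in \vrt_w(f_i)$, and your closing remark correctly identifies why the constant-term hypothesis is what makes the height-graph decomposition hold on the nose.
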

\begin{proof}
	Note that we are considering the $f_i$ as a function of $d$ variables implicitly and thus $\vrt_w(F)$ lies in the same ambient space as $\vrt_w(f_i)$.
    Using the same notation as in \autoref{setup},
    let $u \in \vrt_w(F)$. Since $F$ is a pure Laurent series,
    we can write $u = (0,\dots,u_i, \dots, 0)$. 
    If $u_i = 0$, then $u \in \vrt_w(f_1)$. Otherwise, $u \in \vrt_w(f_i)$. 
\end{proof}

Because of the above two lemmas, we can bound our Newton polygons by the convex hull of all the vertices of all the pure Laurent series.

\begin{remark}
    In the situation of \autoref{setup},
    if $U = U_1\times\cdots\times U_d$, and $U_j$
    is a disk, then $\eta_i = f_{i,1}(t_1) + \cdots + f_{i,d}(t_d)$
    can be chosen so that $f_{i,j} \in K[\![t_j]\!]$ has
    no constant term and we can take $f = f_{i,j}$ in \autoref{padicrolle}.
 By \cite[Corollary~4.18]{katz2015uniform}, we can actually add a 
    constant to $\eta_\omega$ and the statement in \autoref{krzb418} still holds (since it only depends on $d\eta_\omega$).
     Thus, if $U_j$ is an annulus, then we can take $\eta_\omega + c_{i,j} = f_{i,j} + c_{i,j}$ in \autoref{krzb418}, where we choose constants $c_{i,j}$ so that
     their sum is zero, and at most one $f_{i,j} + c_{i,j}$
     has nonzero constant term.
\end{remark}

\section{Calculation of the number of zero-dimensional components}
\label{calculationofnumberofzeros}
In this section we bound the number of zero-dimensional
components of the vanishing locus of the
$\eta_i$ on each tube, the same way as Park did, 
by taking the mixed volume of the convex hulls of
the vertices of the individual components of the pure Laurent
series or power series. 
Throughout this section, assume $p\ge 5$ is a prime. 

\subsubsection{Notation}
For a fixed residue polytube $U$, consider the pullback $U_1\times U_2$, given by fixing an order and pulling back from the symmetric square to the Cartesian product of $X$. 
Let $\omega_1$ and $\omega_2$ be the differentials given
by \autoref{setup}, and let the expansions of the 
corresponding integrals as pure Laurent or power series on
$U_1\times U_2$ be
\[\eta_1 = f_{1,1}(t_1) + f_{1,2}(t_2), \quad \eta_2 = f_{2,1}(t_1) + f_{2,2}(t_2),\] 
where the first index corresponds to the differential and the second index corresponds to the disk/annulus.

We wish to apply the results of Section 5, in particular Theorems \ref{approximate} and \ref{deformation}, to
determine the common zeros of $\eta_1$ and $\eta_2$. In \cite{park}, there are technical difficulties
which arise because of a misapplication of \cite[Theorem 11.7]{rabinoff}. We do not wish
to address this in our work. Rather we include an additional assumption which
allows us to ignore these difficulties.

\begin{assume}
\label{transverseAssumption}
There exist linearly independent, good differentials $\omega_1, \dots, \omega_d$ and a cover of $(\Sym^d X)^{\Lambda_X}$ such that locally on each affinoid, the power series (or Laurent series) $\eta_1, \dots, \eta_d$ coming from integrating have tropicalizations $\Trop(\eta_1), \dots, \Trop(\eta_d)$ where the intersection $\bigcap_{i=1}^d \Trop(\eta_i)$ is transverse, i.e., consists only of isolated points.
\end{assume}

\begin{remark}
Furthermore, \cite[Theorem~11.7]{rabinoff} requires 
that $w$ is in the interior of $P$. To deal with 
this, we note that $[1/e, (a-1)/e] \times [1/e, (b-1)/e]$
is contained in the interior of of
$[r, a/e - r] \times [r, b/e - r]$ for
$r < 1/e$. Then we can appply Theorems \ref{approximate} and \ref{deformation}, and
use Remarks \ref{rmk:delta} and \ref{rmk:Npr}
to take the limits of the bounds.
\end{remark}

If $U_j$ is a disk, let $$k_{i,j} - 1 = \ord_{t_j=0} (f'_{i,j} \bmod{p}).$$
Let $D_1$, $D_2$, and $\alpha$ be as in \autoref{partition}. Let $\mu_e$ be as in \autoref{deltabound}. 

If $K$ is a finite extension of $\QQ_p$,
let $\sm(K)$ and $\nodes(K)$ 
denote the smooth points and nodes respectively in the
image of the reduction map
\[ X(K) \to \X_s(\FF_q), \] where $\FF_q$ is the residue field
of $K$.

Before we proceed with our computations, we recall some useful facts.

\begin{defn}
    For an $n \times n$ matrix $A = (a_{i,j})$, define
    the \textbf{permanent} of $A$ to be
    \[ \Per(A) = \sum_{\sigma \in S_n}\prod_{i=1}^n a_{i,\sigma(i)}. \]
\end{defn}

\begin{lemma}[\protect{\cite[Lemma~A.4]{mccallum2007method}}]
\label{diskriemannroch}
Let $V$ be as in \autoref{codimensionone}. Then
\[
	\sum_{P \in \sm(K)} \ord_P\omega \le 2g-2
\]
for all nonzero $\omega \in V$.
\end{lemma}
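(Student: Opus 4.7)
The plan is to reduce the claim to the classical fact that a nonzero regular differential on a smooth projective curve of genus $g$ has exactly $2g-2$ zeros, combined with a Newton-polygon estimate comparing the order of vanishing of $\omega$ at a smooth reduction point to the number of zeros of $\omega$ in the associated residue disk. Note that the specific conditions defining $V$ (equality of the abelian and Berkovich--Coleman integrals on $A_{\ol P}$ and absence of a $dt/t$ term there) impose no constraint at smooth reduction points of $\X_s$, so the lemma is really a statement about an arbitrary nonzero $\omega \in H^0(J_{\CC_p},\Omega^1)$.

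First, via the Abel--Jacobi embedding $X \hookrightarrow J$, the canonical isomorphism $H^0(J_{\CC_p}, \Omega^1) \simeq H^0(X_{\CC_p}, \Omega^1)$ lets us view $\omega$ as a nonzero regular differential on $X_{\CC_p}$, with divisor $(\omega)$ effective of degree $2g-2$. Next, fix a proper regular semistable model $\X/\O_K$ over a suitable extension $K$ of $\Qp$, and normalize $\omega$ by a power of a uniformizer so that $\omega$ extends to a section of the relative dualizing sheaf on $\X$ whose reduction $\bar\omega$ to the special fiber $\X_s$ is nonzero. This normalization changes neither the locations nor the multiplicities of the zeros of $\omega$ on the generic fiber. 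For a smooth $\FF_q$-point $\bar P \in \sm(K)$, we interpret $\ord_P \omega$ as the usual order of vanishing of $\bar\omega$ at $\bar P$, viewed as a differential on the smooth locus of $\X_s$.

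The key local step is to analyze $\omega$ in each residue disk $D_{\bar P} \subset X(\CC_p)$ of a point $\bar P \in \sm(K)$: write $\omega = f(t)\,dt$ with $f \in \O_K[\![t]\!]$, so that $\ord_P \omega = \ord_{t=0}(f \bmod \pi_K)$. By Weierstrass preparation (equivalently, Strassmann's theorem), $f$ has at least this many zeros in $D_{\bar P}$, counted with multiplicity. Distinct residue disks being disjoint in $X(\CC_p)$, summing over $\bar P \in \sm(K)$ yields
\[
\sum_{\bar P \in \sm(K)} \ord_P \omega \;\le\; \sum_{\bar P \in \sm(K)} \#\{\text{zeros of $\omega$ in } D_{\bar P}\} \;\le\; \deg(\omega) \;=\; 2g - 2.
\]

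The main subtlety is the normalization step on a potentially singular special fiber, namely ensuring that $\ord_P \omega$ is compatibly defined at smooth points of $\X_s$ even when the model is not smooth. Once this is in place, the result is a clean global-to-local counting consequence of the genus of $X$, independent of the integration-theoretic framework that defines $V$.
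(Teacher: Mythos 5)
The paper does not give its own proof of this lemma---it is cited directly from McCallum--Poonen---so there is no internal argument to compare against, and you are effectively supplying one. Your route via zero-counting in disjoint residue disks (Weierstrass preparation applied to $\omega = f(t)\,dt$, plus the fact that a nonzero regular differential on a genus $g$ curve has exactly $2g-2$ zeros in $X(\CC_p)$) is conceptually sound and more elementary than an intrinsic degree-of-the-dualizing-sheaf argument. You are also right that the conditions defining $V$ are about behavior on annuli and play no role here; the bound holds for any nonzero $\omega \in H^0(J_{\CC_p},\Omega^1)$.

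However, the normalization step is a genuine gap that you flag but do not close. Choosing the scaling so that ``$\bar\omega$ is nonzero'' as an element of $H^0(\X_s,\omega_{\X_s})$ is insufficient when $\X_s$ is reducible: a nonzero section of a line bundle on a reducible curve can vanish identically on an entire irreducible component $Y_0$. If some $\bar P \in \sm(K)$ lies on $Y_0$, then in that residue disk $f \equiv 0 \pmod{\pi_K}$, so $\ord_P\omega$ is infinite, your local comparison ``$\#\{\text{zeros of }\omega\text{ in }D_{\bar P}\}\ge \ord_P\omega$'' is false, and the summation collapses. You need to prove, or explicitly hypothesize, that after normalization $\bar\omega$ does not vanish identically on any component of $\X_s$ meeting $\sm(K)$; this is automatic when $\X_s$ is irreducible but not in general, and it is precisely the content that a more intrinsic argument would handle uniformly. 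Either supply the argument that the chosen model and differential allow such a normalization, or state the hypothesis and note it is satisfied in the applications.
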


We split into $7$ cases as described in \autoref{partition}.
Recall that we
have three types of points in
$(\Sym^{2}\X_s)(\overline{\mathbb{F}_p})$ that quadratic points 
reduce to. For a point $\{Q_1, Q_2\}$ reducing to $\{P_1,P_2\}$, let $e$ be the ramification index of the field of definition of $Q_1$ and $Q_2$. 
We can check that given the type of reduction, the ramification index is uniquely
determined. 
In particular, $e = 1$, $2$, $1$ in Cases 1, 2, and 3, respectively.
\subsection{Case 1(a)}
In this case $e = 1$.
Consider a residue polydisk $U_1\times U_2$ which is the preimage of $\{P, P^\sigma\}$ for some smooth points $P, P^\sigma \in \X_s(\mathbb{F}_{p^2})\setminus \X_s(\mathbb{F}_p)$. By \autoref{padicrolle} and \autoref{pureseries},
$\eta_1$ and $\eta_2$ have auxiliary polynomials 
with Newton polygons whose limit is contained in
\[
    X_i = \conv(e_1, e_2, a_{i,1}e_1, a_{i,2}e_2),
\]
where $a_{i,j} = k_{i,j} + \delta(e,k_{i,j}-1)$,
and $e_i$ is the $i$-th standard vector.
(We can take $X_i$ to be these quadrilaterals instead
of the triangles in \cite{park}, because $\eta_1$ and $\eta_2$ can be chosen, by centering the disks, so that they have zero constant terms.)

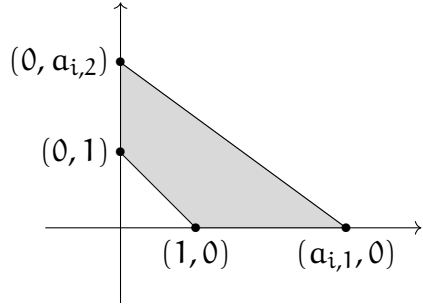
\begin{figure}[h]
\begin{tikzpicture}[scale=1]
    \draw[black, fill=gray, opacity=0.3] (1,0)--(3,0)--(0,2.2)--(0,1);
    \draw[->] (-1,0) -- (4,0);
    \draw[->] (0,-1) -- (0,3);
    \node at (0,1) {\tiny\textbullet};
    \node [left] at (0,1) {$(0,1)$};
    \node at (1,0) {\tiny\textbullet};
    \node [below] at (1,0) {$(1,0)$};
    \node at (0,2.2) {\tiny\textbullet};
    \node [left] at (0,2.2) {$(0,a_{i,2})$};
    \node at (3,0) {\tiny\textbullet};
    \node [below] at (3,0) {$(a_{i,1}, 0)$};
    \draw (1,0)--(0,1);
    \draw (3,0)--(0,2.2);
\end{tikzpicture}
\caption{The Newton polygon is contained in $X_i$.}
\end{figure}

Note that in this case, if
$\{Q_1, Q_2\}$ reduces to $\{P, P^\sigma\}$
with $P \in \X_s(\FF_{p^2}) \setminus \X_s(\FF_p)$,
then we must have $Q_1 = Q_2^\sigma$ as $\{Q_1, Q_2\}$ is a rational point on the symmetric square, so they are either
both zero on both nonzero --- hence the common
zeros of the $\eta_i$ which correspond to points in the residue
polydisk are either in $(K^\times)^2$ or equal to
$(0,0)$. 

\begin{figure}[h]
\begin{tikzpicture}[scale=1]
    \draw[black, fill=gray, opacity=0.3] (1,0)--(3,0)--(0,2.2)--(0,1);
    \draw[->] (-1,0) -- (4,0);
    \draw[->] (0,-1) -- (0,3);
    \node at (0,1) {\tiny\textbullet};
    \node [left] at (0,1) {$(0,\lambda_1 + \lambda_2)$};
    \node at (1,0) {\tiny\textbullet};
    \node [below] at (1,0) {$(\lambda_1 + \lambda_2,0)$};
    \node at (0,2.2) {\tiny\textbullet};
    \node [left] at (0,2.2) {$(0, \lambda_1 a_{1,1} + \lambda_2 a_{2,1})$};
    \node at (3,0) {\tiny\textbullet};
    \node [below] at (4,0) {$(\lambda_1 a_{1,2} + \lambda_2 a_{2,2}, 0)$};
    \draw (1,0)--(0,1);
    \draw (3,0)--(0,2.2);
\end{tikzpicture}
\caption{Minkowski sum of the $\lambda_i X_i$.}
\end{figure}
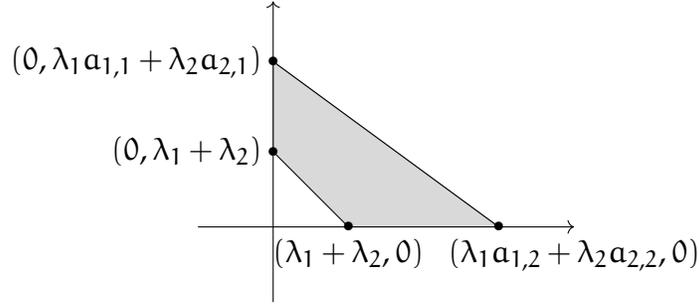
Thus, $N_0(\eta_1,\eta_2)$
is at most the mixed volume of the $X_i$, plus one 
for the possible solution at $(0,0)$, which we can directly compute to be
\[
    N_0(\eta_1,\eta_2) \le \frac{1}{2}(a_{1,2} a_{2,1} + a_{2,2} a_{1,1} - 2) + 1
    = 
    \frac12(\Per(a_{i,j}) - 2) + 1
\]
with $a_{i,j}\le \mu_1 k_{i,j}$ as above. 
By definition,
\[
    k_{i,j} = \ord_P\omega_i + 1,
\]  
and
\[
    a_{1,1}a_{2,2} + a_{1,2}a_{2,1} \\
    \leq \mu_{1}^{2}(k_{1,1}k_{2,2} + k_{1,2}k_{2,1}).
\]
So if we sum up across all the disks involved in Stoll's bound, noting
that the choice of one disk determines the other since they must be centered at conjugate points, so only one summation is needed, we get
\begin{align*}
\sum_{U_1\times U_2} N_0(\eta_1,\eta_2) &\le \frac12 \sum_{P \in \sm(\QQ_{p^2})}\frac{1}{2}(\mu_{1}^{2}(k_{1,1}k_{2,2} + k_{1,2}k_{2,1}) - 2) + 1 \\
    &\le \frac{\mu_1^2}{2}\sum_{P \in \sm(\QQ_{p^2})} 
    (\ord_P\omega_1 + 1)(\ord_{P^\sigma}\omega_2 + 1)\\
     &\le \frac{\mu_1^2}{2}(D_2 + (2g-1)^2),
\end{align*}
where the inequality comes from \autoref{diskriemannroch}.
By the second half of \autoref{setup}, $N_0(\eta_1,\eta_2)$
is an upper bound for the number of rational points on the symmetric square that pulls back to $U$ in this case, so the total number of rational points which
reduce to a pair of conjugate smooth points is at most

\newsavebox\onea

\savebox\onea{$\displaystyle{
\frac{\mu_1^2}{2}(D_2 + (2g-1)^2)
}$}

\begin{equation}
    \label{case1abound}
    \usebox\onea .
\end{equation}


\subsection{Case 2(a)}
In this case $e = 2$.
Consider residue tube $U$ which is the preimage of $\{P, P\}$ for some smooth point $P \in \X_s(\mathbb{F}_p)$.
We fix one of the ramified extensions $K_i$ and pull back
to the Cartesian product to
get a polydisk $U_1\times U_2 \subseteq X^2(K_i)$.

In this case, we must count degenerate zeros. 
We bound the number of degenerate zeros in the following way. 
Since our power series are of the form
$f_{i,1}(t_1) + f_{i,2}(t_2)$, where the
$f_{i,j}$ all have no constant term, 
if without loss of generality we set $t_1 = 0$, we are now looking for 
the number of zeros of the $f_{i,2}$, which we crudely
bound by the sum of the number of common zeros
of each $f_{1,2}$ and $f_{2,2}$. This is
equal to the sum of the lengths of the sides
of the quadrilateral on the axes, by Newton polygons.

Thus, by a mixed volume computation, we get
\[
    N_0(\eta_1,\eta_2) \le \frac 12 (\Per(a_{i,j}) - 2) + 
    (a_{1,1}-1) + (a_{1,2}-1) + (a_{2,1}-1) + (a_{2,2}-1)
    + 1,
\]
where the extra terms come from accounting
for zeros where one or both coordinates are zero, and the $\frac 12 (\Per(a_{i,j}) - 2)$ is the bound on the number
of non-degenerate zeros. 
Using the inequality $a_{i,j} \leq \mu_{2}k_{i,j}$, we get that this is at most
\[ 
    \frac{\mu_2^2}{2} (k_{1,1}k_{2,2} + k_{1,2}k_{2,1}) 
    + \mu_2(k_{1,1} + k_{1,2} + k_{2,1} + k_{2,2})
    - 4.
\]
Now, sum up the first term in the above over all the disks, and use the definition of $k_{i,j}$ to get
\begin{align*}
    \frac{\mu_2^2}{2}& \sum_{P \in \sm(K_i)}
    (\ord_P\omega_1 + 1)(\ord_P\omega_2 + 1) + (\ord_P\omega_1 + 1)(\ord_P\omega_2 + 1) \\
    & \le \mu_2^2(D_1 + (2g-1)^2).
\end{align*} 
Also, if we sum up the other terms, we get 
\begin{align*}
    \sum_{P \in \sm(K_i)} \mu_2(k_{1,1} + k_{1,2} + k_{2,1} + k_{2,2}) &= \mu_2\left(\sum_{P \in \sm(K_i)}
    2(\ord_P\omega_1 + \ord_P\omega_2) + 4\right) \\
    & \le 4\mu_2(2g-2) + 4\mu_2 D_1.
\end{align*}
Thus in total, there are at most 
\newsavebox\twoa

\savebox\twoa{$\displaystyle{
    \mu_2^2(D_1 + (2g-1)^2) + 4\mu_2 ((2g-2) + D_1) - 4
    D_1
}$}

\begin{equation}
    \label{case2abound}
    \usebox\twoa.
\end{equation}
rational points which reduce to $\{P, P\}$ for some
smooth point $P$.

\begin{remark}
Because the above bound is for one ramified extension, we should multiply the bound for $N_0(\eta_1,\eta_2)$ by $2$ to account for both ramified extensions. However, by the second half of
\autoref{setup}, there is a two-to-one correspondence
between zeros and points, so we must also multiply by $\frac{1}{2}$. 
\end{remark}

\subsection{Case 3(a)}
In this case $e = 1$.
Consider a residue polydisk $U_1\times U_2$ which is the preimage of $\{P_1, P_2\}$ for some \emph{distinct} smooth
points $P_1, P_2 \in \X_s(\mathbb{F}_p)$.
By the same reasoning as before, 
\[
	N_0(\eta_1,\eta_2)\le\frac 12 (\Per(a_{i,j}) - 2) + 
    (a_{1,1}-1) + (a_{1,2}-1) + (a_{2,1}-1) + (a_{2,2}-1)
    + 1.
\]
By \autoref{deltabound}, $a_{i,j} \le \mu_1 k_{i,j}$, so
the above is bounded by 
\[ 
    \frac{\mu_1^2}{2} (k_{1,1}k_{2,2} + k_{1,2}k_{2,1}) 
    + \mu_1(k_{1,1} + k_{1,2} + k_{2,1} + k_{2,2})
    - 4.
    \]
Note that $k_{i,j}$ depends on $P = \{P_1, P_2\}$, and 
\[ k_{i,j} = \ord_{P_j}\omega_i + 1. \]
We sum up the linear terms over the possible disks. In this case, the points reduce to arbitrary different disks so we must take a double sum, so we have
\begin{align*}
    \sum_{U_1\times U_2} N_0(\eta_1,\eta_2)&\le \sum_{P_1,P_2 \in \sm(\QQ_p)} \mu_1(k_{1,1} + k_{1,2} + k_{2,1} + k_{2,2}) \\
    &\qquad = \mu_1 \sum_{P_1,P_2 \in \sm(\QQ_p)} \ord_{P_1}\omega_1 + \ord_{P_2}\omega_1
    + \ord_{P_1}\omega_2 + \ord_{P_2}\omega_2 + 4 \\
    &\qquad = 2\mu_1 \left( \sum_{P_1,P_2 \in \sm(\QQ_p)} \ord_{P_1}\omega_1 + \ord_{P_2}\omega_1\right) + 4\mu_1\binom{D_1}{2}.
\end{align*}
Now notice that we have $D_1$ disks in $\sm(\mathbb{Q}_p)$ under consideration, 
and thus each $P_1 \in \sm(\mathbb{Q}_p)$, has $D_1 - 1$ 
possible paired $P_2 \in \sm(\mathbb{Q}_p)$. 
Thus, by \autoref{diskriemannroch} we can bound the above by
\begin{align*}
    \mu_1\left((4g-4)(D_1-1) + 4\binom{D_1}{2}\right).
\end{align*}
We bound the summation of the first term as followss.
\begin{align*}
    \frac{\mu_1^2}{2} &\sum_{P_1, P_2 \in \sm(\QQ_p)}
    (\ord_{P_1}\omega_1 + 1)(\ord_{P_2}\omega_2 + 1)
    + (\ord_{P_1}\omega_2 + 1)(\ord_{P_2}\omega_1 + 1)
    \\
    &\le \frac{\mu_1^2}{2} \left(\sum_{P_1 \in \sm(\QQ_p)}
    \ord_{P_1}\omega_1 + 1 \right) \cdot
    \left(\sum_{P_2 \in \sm(\QQ_p)}
    \ord_{P_2}\omega_2 + 1\right) \\
    &\le \frac{\mu_1^2}{2}(2g-2 + D_1)^2.
\end{align*}
Thus, the final bound in this case is
\newsavebox\threea

\savebox\threea{$\displaystyle{
\frac{\mu_1^2}{2}(2g-2 + D_1)^2 + \mu_1\left((4g-4)(D_1-1) + 4\binom{D_1}{2}\right) - 4\binom{D_1}{2}
}$}
\begin{equation}
    \label{case3abound}
    \usebox\threea.  
\end{equation}

\subsection{Case 1(b), 2(b), and 3(b)}
Consider a residue polyannulus $U_1\times U_2$ which is the preimage of $\{P_1, P_2\}$ for some $P_1, P_2$ which are both nodes on $\X_s$. In Case 1(b), $P_1, P_2 \in \X_s(\mathbb{F}_{p^2}) \setminus \X_s(\mathbb{F}_p)$ and $P_1 = P_2^{\sigma}$. In Case 2(b), $P_1, P_2 \in \X_s(\mathbb{F}_p)$ and $P_1 = P_2$. In Case 3(b), $P_1, P_2 \in \X_s(\mathbb{F}_p)$ and $P_1\neq P_2$. Also,
$e = 1$, $2$, $1$ in Cases 1(b), 2(b), and 3(b) respectively.

By \autoref{pureseries} and \autoref{krzb418}, 
if $P_1$ and $P_2$ are both nodes on $\X_s$,
then both $\eta_1$ and $\eta_2$ can be approximated
by Laurent polynomials whose Newton polygons are contained
in 
\[
    \conv(c_{1,1}e_1, c_{2,1}e_1, c_{1,2}e_2,  c_{2,2}e_2),
\]
for some $c_{i,j}$ with $c_{2,j} - c_{1,j} \le 2N_p(1/e, 2g-2) \le 8g-8$, according to \autoref{krzb418}.
It can be checked that the mixed volume of
two of these is at most $16(2g-2)^2$.
In the annulus case, there are no degenerate zeros because 
$0$ is not in the annulus.
Thus, for Cases 1, 2, 3 there are a total of
\[
   16(2g-2)^2\left(\frac12 \alpha + \alpha + \binom \alpha2\right) = 16(2g-2)^2\left(\frac32 \alpha + \binom \alpha2\right)
\]
zero-dimensional components.
So our final bound is
\newsavebox\onetwothreeb

\savebox\onetwothreeb{$\displaystyle{
16(2g-2)^2\left(\frac32 \alpha + \binom \alpha2\right)
}$}

\begin{equation}
    \label{case1b2b3bbound}
    \usebox\onetwothreeb.
\end{equation}


\subsection{Case 3(c)}
In this case $e = 1$.
Consider a residue polytube $U_1\times U_2$ 
which is the preimage of $\{P_1, P_2\}$ where $P_1, P_2 \in \X_s(\mathbb{F}_p)$, and $P_1$ is a smooth point on $\X_s$ and $P_2$ is a node.
In this case the Newton polygons can be bounded by
\[
    \conv(0, a_{1,1}e_1, a_{2,1}e_1, c_{1,2} e_2, c_{2,2}e_2)
\]
with $c_{1,2} - c_{2,2} \le 8g-8$, again by \protect{\cite[Corollary~4.18]{katz2015uniform}}.
It can be shown that the mixed volume of any two
of these is at most
\[
    (4g-4) (a_{1,1} + a_{2,1}).
\]
As $U_2$ is an annulus, the only degenerate zeros must be those for which the first component is $0$. Here
each $\eta_i$ is the sum of a power series and a Laurent series. By centering the disk $U_1$, we can ensure that the power series have no constant term, so degenerate zeros are common zeros of the two Laurent series. Therefore, again by \protect{\cite[Corollary~4.18]{katz2015uniform}}, we have at most $4(2g-2)$ degenerate zeros.
Thus, in total, we have a bound of
\[
    N_0(\eta_1,\eta_2) \le (4g-4) (a_{1,1} + a_{2,1}) + 4(2g-2).
\]
The sum over all such pairs $\{P_1, P_2\}$ is bounded by
\begin{align*}
    2(2g-2)&\sum_{P_1 \in \sm(\QQ_p)}\sum_{P_2 \in \nodes(\QQ_p)}
    \mu_1(2 + \ord_{P_1}\omega_1 + 
    \ord_{P_1}\omega_2) + 2 \\
    &\le (4g-4)(2\alpha\mu_1(2g-2)  + (2\mu_1 + 2)D_1\alpha).
\end{align*}
Thus, the bound in this case is
\newsavebox\threec
\savebox\threec{$\displaystyle{
(8g-8)(\alpha\mu_1(2g-2)  + (\mu_1 + 1)D_1\alpha)
}$}
\begin{equation}
    \label{case3cbound}
    \usebox\threec.
\end{equation}

To summarize our computations, we have the following table.
\renewcommand{\arraystretch}{2.14}
\begin{longtable}{| l | l | }\hline
    Case & Number of zero-dimensional components
\\*\hline\hline
    1(a) & \usebox\onea
\\\hline
    2(a) & \usebox\twoa
\\\hline
	3(a) & \usebox\threea
\\\hline 
    1(b), 2(b), 3(b) & \usebox\onetwothreeb
\\\hline
    3(c) & \usebox\threec
\\\hline
\caption{Data for zero-dimensional components}
\end{longtable}

\section{Uniform bounds}
\label{uniformbounds}

The above computations give us the following theorem by summing up over the various cases. Note that $d = 2$ and we assume $g \ge 4$, so \autoref{dlessthang} is automatically satisfied.
\bound*

\begin{proof}
	Let $p = 5$.
    By adding up Equations \ref{case1abound}, \ref{case2abound}, \ref{case3abound}, \ref{case1b2b3bbound} and \ref{case3cbound}, we get that there are at most
    \begin{gather*}
        128g^4 + 128g^3t + 32g^2t^2 + \frac{1616}3g^3 - \frac{1256}3g^2t - 344gt^2 - \frac{8858}9g^2 \\
        - 380gt + 662t^2 + \frac{11654}9g - 206t - \frac{4012}9
    \end{gather*}
    rational points.
    Since $0 \le t \le g$, we plug in $t = 0$
    for all the negative terms and $t = g$ for
    all of the positive terms in the above expression to get our statement.
\end{proof}


\subsection{Rank-favorable bounds for hyperelliptic curves}
Using \autoref{stoll77}, we can
compute rank-favorable bounds if $X$ is a hyperelliptic curve.
\begin{prop}[\protect{\cite[Proposition~7.7]{stoll2013uniform}}]
    \label{stoll77}
    Let $A_{\ol{P}}$ be an annulus in $X$ (the preimage of a node $\ol{P}$).
    Assume $p > e + 1$, where $e$ is the
    ramification index of $k$.
	Then for $\ell \ge 1$, any $g - \ell$ dimensional
    subspace of good differentials contains
    $\omega$ such that the number of zeros of $\eta_\omega$  in 
    $A(k^{\unr})$ is at most $2\mu_e \ell$, where
    $\mu_e$ is as in \autoref{deltabound}.
\end{prop}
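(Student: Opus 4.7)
The plan is to adapt the disk-case $p$-adic Rolle argument (\autoref{padicrolle} together with the constant $\mu_e$ of \autoref{deltabound}) to the annulus $A_{\ol P}$ by producing a single carefully chosen $\omega \in V$ and splitting its integral into positive- and negative-power halves, each of which can then be treated as if it lived on a disk.

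Fix a local uniformizer $t$ on $A_{\ol P}$. A good differential expands as $\omega = \sum_{n \ne 0} a_n(\omega)\, t^n \, \tfrac{dt}{t}$, so that $\eta_\omega = C + \sum_{n \ne 0} \tfrac{a_n(\omega)}{n}\, t^n$. The first step is to use the $g-\ell$ dimensions of $V$ to find a nonzero $\omega \in V$ whose Laurent expansion vanishes to order at least $\ell$ on \emph{each} branch of the node, i.e., $a_n(\omega) = 0$ for $0 < |n| < \ell$. The naive count --- imposing $2(\ell-1)$ linear conditions drops the dimension by at most $2(\ell-1)$, leaving a kernel of dimension $\ge g - 3\ell + 2$ --- is not tight enough; the necessary refinement is a Clifford-type argument for the two branches (in the spirit of \cite[Corollary~6.7]{stoll2013uniform}) exploiting that the residue conditions imposed by the two branches of a single node are coupled, so that the effective codimension is only $\ell - 1$ and the desired subspace is nonzero as soon as $\dim V \ge \ell$.

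Given such $\omega$, I would split $\eta_\omega = \eta_\omega^{+} + C + \eta_\omega^{-}$ into its positive- and negative-power pieces. Under the substitution $s = t$ (respectively $s = t^{-1}$), each piece becomes a power series in $s$ with no constant term whose derivative vanishes to order at least $\ell - 1$ modulo the uniformizer. Applying \autoref{padicrolle} together with \autoref{deltabound} (in the form $k + \delta(e,k-1) \le \mu_e k$) to each piece on the appropriate ``half-disk'' inside $A(k^{\unr})$ then bounds its zero count by $\mu_e \ell$, and summing the two contributions yields the stated total of $2\mu_e \ell$.

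The main obstacle is the linear-algebra step: the naive codimension bound is off by a factor of $2$, and the proof hinges on carefully exploiting the coupling of the residue data at the two branches of a single node to get $\omega \in V$ vanishing to order $\ell$ on both simultaneously. The splitting step, while more routine, also requires care, since a good differential on the whole annulus need not restrict to a ``disk-good'' differential in the naive sense; one must verify that after the substitution $s = t^{\pm 1}$ the hypothesis of \autoref{padicrolle} (namely $f' \in \O_K\llbracket s\rrbracket$ and $f(0)=0$) is genuinely inherited, so that the $\mu_e$ constant transfers cleanly to each half.
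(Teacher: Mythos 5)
The central step of your proposal has the inequality running the \emph{wrong way}, and this cannot be repaired. In \autoref{padicrolle}, the quantity $k$ is the smallest exponent $n$ with $v(a_n)=0$; the Newton polygon of $\eta_\omega$ is then confined to $\{1,\dots,k+\delta(e,k-1)\}$ and the bound on the number of zeros is $\mu_e k$ --- so a \emph{small} $k$ gives a good bound, and a \emph{large} $k$ gives a bad one. (This is exactly how the paper uses it: in Case 1(a) the bound is $a_{i,j}\le\mu_e k_{i,j}$ with $k_{i,j}=\ord_P\omega_i+1$, and Riemann--Roch is invoked to show the $k_{i,j}$ are \emph{small} on average.) Your plan is to impose $a_n(\omega)=0$ for $0<|n|<\ell$; this forces the reduced order at each end to be \emph{at least} $\ell$, hence $k\ge\ell$, and the resulting bound is $\mu_e k \ge \mu_e\ell$, not $\le\mu_e\ell$. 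You have not bounded $k$ from above at all --- the first surviving exponent could be $\ell$, or $g$, or anything. Forcing coefficients to vanish makes the bound worse, not better.

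Stoll's actual argument goes in the opposite direction from your plan, and the role of Clifford's theorem is dual to what you propose. One does not choose $\omega$ to vanish more; one observes that \emph{every} $\omega$ in $V$ already vanishes (mod the uniformizer) to orders $k^+_{\min}$ and $k^-_{\min}$ at the two ends of the annulus, where $k^\pm_{\min}$ are by definition the smallest exponents at which the reduced coefficient functional is nonzero on $V$. Clifford's theorem, applied to the reduction of $V$ as a space of sections of the canonical bundle twisted down by $k^+_{\min}P^++k^-_{\min}P^-$ at the two branches of the node, gives $\dim V \le g - \tfrac12(k^+_{\min}+k^-_{\min})$, i.e.\ $k^+_{\min}+k^-_{\min}\le 2(g-\dim V)=2\ell$. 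A generic $\omega\in V$ then \emph{achieves} both minima simultaneously (the set of $\omega$ achieving each is the complement of a proper subspace), and a single Newton-polygon argument on the full Laurent series --- not a split into two independent half-disk problems, since zeros of a Laurent series are not the union of zeros of its positive and negative parts --- gives at most $\mu_e(k^++k^-)\le 2\mu_e\ell$ zeros. In other words, the $(g-\ell)$-dimensionality of $V$ is not spent ``eliminating'' coefficients of a chosen $\omega$; it is fed into Clifford's theorem as a lower bound on $h^0$, which caps how deep $V$ can collectively vanish. This also explains why your dimension count was never going to close: even the most optimistic coupling leaves you stuck for $\ell>g/2$, whereas the Clifford argument works for every $\ell\le g-1$.

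Two smaller points. First, the ``coupling'' you invoke to halve the codimension is only plausible in the hyperelliptic setting (where the two branches of the node are hyperelliptic conjugates and impose the same conditions on $\omega=p(x)\,dx/y$); it is not a general feature, and Stoll's proposition is indeed specific to hyperelliptic curves. Second, even granting the coupling, the statement must hold for $\ell$ as large as $g-1$ (when $\dim V=1$, there is no freedom to choose $\omega$ at all), and any approach premised on finding a special $\omega$ in the kernel of many linear conditions fails outright there.
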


	By \autoref{choosegoodomegas} we have
    a $g - r - 2$ dimensional space $V$ of good 	differentials.
    Applying \autoref{stoll77} with $\ell = r + 3$
    shows that any $g - r - 3$ dimensional subspace
    of $V$ contains $\omega$ such that $\eta_\omega$
    has at most $2\mu_e(r + 3)$ zeros.
    Thus, for each
	point $\{P_1, P_2\} \in (\Sym^2\X_s)(\FF_p)$, where
	$P_2$ is a node, we can choose linearly independent $\omega_1$
	and $\omega_2$ such that the number of zeros
	of $\eta_i$ on $A_{P_2}$ is at most $2\mu_e (r + 3)$.
    
\begin{remark}
	By \autoref{countmult}, this is equivalent to the statement 
    that there exists an interval $[c_1, c_2]$ of length at most
    $2\mu_e (r + 3)$ such that $u \notin [c_1, c_2]$ implies $u \notin
    \vrt_w(\eta_i)$ for $w \in (0, a)$, where $A_{\ol P}$ is the annulus given by
    \[ \{z \mid 0 < v(z) < a \}.\]
\end{remark}
Then by adjusting the bounds computed in Equations \ref{case1abound}, \ref{case2abound}, \ref{case3abound}, \ref{case1b2b3bbound}, \ref{case3cbound} using \autoref{stoll77}, we can generate rank-favorable uniform bounds on the number of quadratic points of hyperelliptic curves.

\hyperelliptic*

\begin{proof}
    Our proof follows the same lines as the proof of \autoref{finalboundcurves}. Specifically, the cases 
    for which the computations change are 1(b), 2(b), 3(b),
    and 3(c).
    For cases 1(b), 2(b), and 3(b), by \autoref{stoll77}, we can now say that our Newton polygon is contained within
    \[
        \conv(c_{1,1}e_1, c_{2,1}e_1, c_{1,2}e_2, c_{2,2}e_2)
    \]
    for some $c_{i,j}$ with $c_{2,1}-c_{1,1} \leq 8g-8$ and $c_{2,2}-c_{1,2} \leq 2\mu_er$. The mixed volume of two
    of these is at most $8(4g-4)(\mu_e(r+3)) \leq 8\mu_e(r + 3)(2g-2)$. Thus our bound for case 1(b), 2(b), 3(b) is
    \[
        8\mu_1(r+3)(2g-2)\left(\frac12 \alpha\right)
 + 
        8\mu_2(r+3)(2g-2)(\alpha)
+
        8\mu_1(r+3)(2g-2)\binom{\alpha}{2}.
    \]
    Now we adapt our bound of case 3(c). 
    We can choose differentials so that the Newton polygons are contained in
    \[
        \conv(0, a_{1,1}e_1, a_{2,1}e_1, c_{1,2} e_2, c_{2,2}e_2)
    \]
    where $c_{2,2} - c_{1,2} \le 2\mu_1 (r+3)$
    by \autoref{stoll77}.
    Then by a similar computation as before,
    we have at most    
    \[
        2\mu_1 (r+3)(\alpha\mu_1(2g-2) + (\mu_1 + 1)D_1\alpha)
    \]
    zero dimensional components.
    Then, as above we can plug in explicit values of $p$, $\mu_1$ and $\mu_2$ to get a bound
    \begin{gather*}
		\frac{128}{3}g^3r + \frac{128}{3}g^2tr + \frac{32}{3}gt^2r + \frac{128}g^3 + 128g^2t + 32gt^2 + \frac{2192}{9}g^2r - \frac{776}{9}gtr - 104t^2r \\ + \frac{69353}{36}g^2  - \frac{8807}{6}gt - \frac{23}{4}t^2 - \frac{5624}{9}gr + \frac{2072}{9}tr - \frac{99575}{36}g + \frac{4531}{4}t + \frac{736}{3}r + \frac{8053}{9}.
    \end{gather*}
    Using $0 \le t \le g$ as before gives an upper bound of
    \[
		96g^3r + 288g^3 + \frac{2192}{9}g^2r + \frac{69353}{36}g^2 - \frac{1184}{3}gr - \frac{14699}{9}g + \frac{736}{3}r + \frac{8053}{9}.
    \]
\end{proof}

\torsionpackets*
\begin{proof}
    Take the rank-favorable bound given in \autoref{hyperellipticrankfavorable}. If we let $r=0$, we immediately get the size of torsion packets. Thus our statement is proved.
\end{proof}

\bibliographystyle{amsalpha}
\def\bibfont{\small}
\bibliography{Master}{} 
\end{document}